\newtheorem{theorem}{Theorem}
\newtheorem{lemma}[theorem]{\quad Lemma}
\newtheorem{example}{Example}
\newtheorem{remark}{Remark}
\long\def\@makecaption#1#2{%
 \vskip\abovecaptionskip
  \sbox\@tempboxa{{#1.}\quad #2}%
 \ifdim \wd\@tempboxa >\hsize
    { #1.}\quad #2\par
     \else
  \global \@minipagefalse
   \hb@xt@\hsize{\hfil\box\@tempboxa\hfil}%
   \fi
   \vskip\belowcaptionskip}
\title{ \textbf{Second order WSGD operators II: A new family of difference schemes for space fractional advection diffusion equation
}}
\author{Can Li$^{a}$,   Weihua Deng$^{b,}$\footnote{Corresponding author. E-mail address: dengwh@lzu.edu.cn.}\\
\small{$^a$ Department of Applied Mathematics, School of Sciences,  Xi'an University of Technology,}\\
        \small{ Xi'an, Shaanxi 710054, P.R.China}\\
\small{$^{b}$ School of Mathematics and Statistics, Lanzhou University,
Lanzhou 730000, P.R.China}\\
        \vspace{-0.1cm}
        }
\date{}
\begin{document}
\maketitle \makeatletter
\newcommand{\rmnum}[1]{\romannumeral #1} 　　
\newcommand{\Rmnum}[1]{\expandafter\@slowromancap\romannumeral #1@}
\makeatother \vspace{-1cm}
\begin{abstract}
The second order weighted and shifted Gr\"{u}nwald difference (WSGD) operators are developed in [Tian et al.,
arXiv:1201.5949] to solve space fractional partial differential equations. Along this direction, we further design a new family of second order WSGD operators; by properly choosing the weighted parameters, they can be effectively used to discretize space (Riemann-Liouville) fractional derivatives. Based on the new second order WSGD operators, we derive a family of difference schemes for the space fractional advection diffusion equation. By von Neumann stability analysis, it is proved that the obtained schemes are unconditionally stable. Finally, extensive numerical experiments are performed to demonstrate the performance of the schemes and confirm the convergent orders.

\end{abstract}

\textbf{Mathematics Subject Classification(2010)}: 26A33, 65M06, 65M12

\textbf{Key words}:\quad  Riemann-Liouville fractional derivative, WSGD operator,
Fractional advection diffusion equation, Finite difference approximation, Stability

\section{Introduction}
\setlength{\unitlength}{1cm}
As an extension of classical calculus, fractional calculus also has more than three centuries of history.
However, it seems that the applications of fractional calculus to physics and engineering attract enough
attention only in the last few decades. Nowadays, it has been found that a
broad range of non-classical phenomena in the applied sciences and engineering
can be well described by some fractional kinetic equations \cite{Metzler:00}.
The fractional calculus is becoming more and more popular, especially in describing the anomalous diffusions
\cite{Benson:00,Podlubny:99,Metzler:00,Metzler:04}, which arise in physics, chemistry, biology, and other complex dynamics.
With the appearance of various kinds of fractional partial differential equations (PDEs), finding the ways to effectively solve them becomes a natural topic. Based on the integral transformations, sometimes we can find the analytical solutions of linear fractional PDEs with constant coefficients \cite{Oldham:74,Podlubny:99,Metzler:00}. Even in these cases, most of the time their solutions are expressed by transcendental functions or infinite series. So looking for the numerical solutions of fractional PDEs becomes a realistic expectation in practical applications; and designing efficient and robust numerical schemes for fractional PDEs is the basis to this task.

Over the past decade, the finite difference methods are implemented in simulating the space fractional advection
diffusion equations \cite{Lynch:03,Deng:04,Meerschaert:04,Liu:04,Meerschaert:06,Sousa:09}. However, most of the time,
the accuracy of  finite difference scheme or finite volume method is first order.
In recent years, high order discretizations and fast methods for space
fractional PDEs with Riemann-Liouville fractional derivatives attract many authors' interests.
Based on the Toeplitz-like structure of the difference matrix, Wang et al \cite{wang:10} solve the algebraic equation corresponding to fractional diffusion equation with a $N\log ^2N$ cost; Pang and Sun \cite{Pang:12}
propose a multigrid method to solve the discretized system of the fractional diffusion equation.
By using the linear spline approximation, Sousa and Li provide a second
order discretization for the Riemann-Liouville fractional derivatives and establish an
unconditionally stable finite difference method for one-dimensional
fractional diffusion equation in \cite{Sousaa:11}; the results on two-dimensional
two-sided space fractional convection diffusion equation in finite domain can be seen in
\cite{Chen:13}. Ortigueira \cite{Ortigueira:06} gives the ``fractional centred derivative"
to approximate the Riesz fractional derivative with second order accuracy, and this
scheme is used by \c{C}elik and Duman in \cite{Celik:11} to approximate fractional
diffusion equation with the Riesz fractional derivative in a finite domain.

More recently, Tian et al \cite{Tian:11} propose the second order difference approximations, called weighted and shifted Gr\"{u}nwald difference (WSGD) approximations, to the Riemann-Liouville space fractional derivatives. The basic idea of the WSGD approximation is to cancel the low order terms by combining the Gr\"{u}nwald difference operators with different shifts and weights. Along this direction, this paper further introduces a new  family of WSGD opertors by providing more to be chosen parameters, called second order WSGD operators II. As specific applications, the second order WSGD operators II are used to discretize the space fractional derivative of the space fractional advection diffusion equation. And the von Neumann stability analyses are performed to the obtained numerical schemes. We theoretically prove and numerically verify that with proper chosen parameters for the WSGD space discretizations, the obtained implicit and Crank-Nicolson schemes are both unconditionally von Neumann stable.


The outline of this paper is organized as follows. In Section \ref{sec:2}, we first introduce
a new family of second order discretizations of the Riemann-Liouville fractional derivatives.
Then two kinds of difference schemes for the one dimensional space fractional advection diffusion
equation are presented in Section \ref{sec:3}, and the corresponding numerical stabilities are performed. Furthermore, in
Section \ref{two-dimensions}, the numerical schemes and the proof of numerical stability are extended to two dimensional cases.  Finally, in Section \ref{Numericalsec}, extensive numerical experiments are carried out to demonstrate the performance of the proposed numerical schemes and confirm theoretical analysis.

\section{Second order WSGD discretizations for the Riemann-Liouville fractional derivatives}\label{sec:2}
There are some different definitions for the fractional derivatives, such
as the Gr\"{u}nwald-Letnikov derivative, the Riemann-Liouville
derivative, the Caputo derivative, and other modified definitions for the practical applications \cite{Podlubny:99}. Most of the time, they are not completely equivalent; in particular, the Gr\"{u}nwald-Letnikov and Riemann-Liouville
derivatives are equivalent if ignoring the regularity requirements of the functions being performed. Usually, the more popular space fractional derivative is the Riemann-Liouville derivative.
Let the function $u(x)$ be defined on the finite interval $(a,b)$, then the left and right Riemann-Liouville
fractional derivatives of order $\alpha \,(n-1<\alpha<n)$ are, respectively,  defined by
\begin{equation}\label{1.2} {_{a}}D_x^{\alpha}
u(x)=\frac{1}{\Gamma(n-\alpha)}\frac{\partial^n}{\partial x^n}\int^x_{a}(x-s)^{n-\alpha-1}u(s)ds,
\end{equation}
and
\begin{equation}\label{1.3}
{_x}D_{b}^{\alpha}
u(x)=\frac{(-1)^n}{\Gamma(n-\alpha)}\frac{\partial^n}{\partial x^n}\int^{b}_x(s-x)^{n-\alpha-1}u(s)ds.
\end{equation}
Based on the definition of the Gr\"{u}nwald-Letnikov
derivative and its relationship with the left and right Riemann-Liouville
derivatives, we have  \cite{
Podlubny:99}
\begin{equation}\label{2.3}
_aD_x^\alpha u(x) =\frac{1}{h^\alpha}\sum^{[\frac{x-a}{h}]}_{k=0}w^{(\alpha)}_{k}
u(x-kh)+O(h),
\end{equation}
\begin{equation}\label{2.4}
_xD_b^\alpha u(x)=\frac{1}{h
^\alpha}\sum^{[\frac{b-x}{h}]}_{k=0}w^{(\alpha)}_k u(x+kh)+O(h),
\end{equation}
where  $h=(b-a)/N$,  $N$ is a
positive integer, $\Gamma(\cdot)$ is the gamma function, and $
w^{(\alpha)}_{k}:=\frac{\Gamma(k-\alpha)}{\Gamma(-\alpha)\Gamma(k+1)}=(-1)^k\binom{(\alpha)}{k}
$ is the normalized Gr\"{u}nwald weights. However, the difference scheme based on
(\ref{2.3}) and/or (\ref{2.4}) to discretize space fractional derivatives for the time dependent problems is unconditionally unstable
\cite{Meerschaert:04}. To overcome this problem, Meerschaert and Tadjeran
 firstly introduce the shifted Gr\"{u}nwald-Letnikov approximation formulas \cite{Meerschaert:04}
\begin{equation}\label{2.5}
_aD_x^\alpha u(x)=\frac{1}{h^\alpha}\sum^{[\frac{x-a}{h}]+p}_{k=0}w^{(\alpha)}_{k}
u(x-(k-p)h)+O(h),
\end{equation}
\begin{equation}\label{2.6}
_xD_b^\alpha u(x)=\frac{1}{h ^\alpha}\sum^{[\frac{b-x}{h}]+q}_{k=0}w^{(\alpha)}_{k}
u(x+(k-p)h)+O(h).
\end{equation}
If a function $u(x)$ has support on the finite interval $(a, b)$, then it can be extended to the whole real line $\mathbb{R}$.
In this case the symbol $a$ (or $b$)  in left (right) Riemann-Liouville fractional derivative (\ref{1.2}) ((\ref{1.3}))
can be replaced by $-\infty$ (or $\infty$).
And the following properties hold.
\begin{lemma}\label{lem1}  \cite{Meerschaert:04}
Let $u(x)\in L^1(\mathbb{R})$, ${_{-
  \infty}}D_x^{\alpha+1}u$ and its Fourier transform belong to $L^1(\mathbb{R})$. Let
$p\in\mathbb{R},h>0$ and $1<\alpha\leq2$. Define
\begin{equation*}\label{lem01}
A^{\alpha}_{h,p}u(x):=\frac{1}{h^{\alpha}}\sum_{k=0}^{\infty}w^{(\alpha)}_{k}u(x-(k
-p)h),
\end{equation*}
\begin{equation*}\label{lem20}
B^{\alpha}_{h,p}u(x):=\frac{1}{h^{\alpha}}\sum_{k=0}^{\infty}w^{(\alpha)}_{k}u(x+(k
-p))h).
\end{equation*}
Then
\begin{equation*}\label{lem02}
A^{\alpha}_{h,p}u(x)={_{-\infty}}D_x^{\alpha}u(x)+O(h),
\end{equation*}
\begin{equation*}\label{lem201}
B^{\alpha}_{h,q}u(x)={_x}D_{\infty}^{\alpha}u(x)+O(h),
\end{equation*}
uniformly in $\mathbb{R}$ as $h\rightarrow0$.
\end{lemma}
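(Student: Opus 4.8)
The plan is to pass to the Fourier side, following the strategy of Meerschaert and Tadjeran. First I would record the generating-function identity $\sum_{k=0}^{\infty} w^{(\alpha)}_k z^k = (1-z)^{\alpha}$ for $|z|\le 1$, which is immediate from $w^{(\alpha)}_k=(-1)^k\binom{\alpha}{k}$ and the binomial series. Since $u\in L^1(\mathbb{R})$, its Fourier transform (denoted $\widehat{\;\cdot\;}$) is bounded and continuous, the shifted Gr\"{u}nwald series converges absolutely, and one may interchange summation with the Fourier integral to obtain
\[
\widehat{A^{\alpha}_{h,p}u}(\omega)=\frac{1}{h^{\alpha}}\Big(\sum_{k=0}^{\infty}w^{(\alpha)}_k e^{-ik\omega h}\Big)e^{ip\omega h}\,\widehat{u}(\omega)=\Big(\frac{1-e^{-i\omega h}}{h}\Big)^{\alpha}e^{ip\omega h}\,\widehat{u}(\omega)=(i\omega)^{\alpha}\widehat{u}(\omega)\,W_p(i\omega h),
\]
where $W_p(z):=e^{pz}\big((1-e^{-z})/z\big)^{\alpha}$. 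Because $\widehat{{_{-\infty}}D_x^{\alpha}u}(\omega)=(i\omega)^{\alpha}\widehat{u}(\omega)$, the transform of the error is $(i\omega)^{\alpha}\widehat{u}(\omega)\big(W_p(i\omega h)-1\big)$, so the whole problem reduces to estimating $W_p-1$ on the imaginary axis.

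The key step is a uniform bound $|W_p(i\theta)-1|\le M|\theta|$ for all $\theta\in\mathbb{R}$, with $M$ independent of $\theta$. A Taylor expansion at $z=0$ gives $W_p(z)=1+(p-\tfrac{\alpha}{2})z+O(z^2)$, so $\psi(z):=(W_p(z)-1)/z$ stays bounded near $0$; and for $|\theta|$ large one uses $|e^{pi\theta}|=1$ (here $p\in\mathbb{R}$ is essential) together with $|(1-e^{-i\theta})/(i\theta)|^{\alpha}=O(|\theta|^{-\alpha})$ to conclude $W_p(i\theta)-1\to -1$, hence $\psi(i\theta)\to 0$. By continuity, $M:=\sup_{\theta}|\psi(i\theta)|<\infty$. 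Then, for every $\omega$ and every $h>0$,
\[
\big|\widehat{A^{\alpha}_{h,p}u}(\omega)-\widehat{{_{-\infty}}D_x^{\alpha}u}(\omega)\big|=|(i\omega)^{\alpha}\widehat{u}(\omega)|\cdot|i\omega h|\cdot|\psi(i\omega h)|\le Mh\,\big|(i\omega)^{\alpha+1}\widehat{u}(\omega)\big|=Mh\,\big|\widehat{{_{-\infty}}D_x^{\alpha+1}u}(\omega)\big|.
\]

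Finally, since $\widehat{{_{-\infty}}D_x^{\alpha+1}u}\in L^1(\mathbb{R})$ by hypothesis, Fourier inversion is valid and
\[
\big|A^{\alpha}_{h,p}u(x)-{_{-\infty}}D_x^{\alpha}u(x)\big|\le\frac{1}{2\pi}\int_{\mathbb{R}}\big|\widehat{A^{\alpha}_{h,p}u}(\omega)-\widehat{{_{-\infty}}D_x^{\alpha}u}(\omega)\big|\,d\omega\le\frac{Mh}{2\pi}\big\|\widehat{{_{-\infty}}D_x^{\alpha+1}u}\big\|_{L^1(\mathbb{R})},
\]
which is $O(h)$ with a constant independent of $x$, giving the claimed uniform convergence. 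The assertion for $B^{\alpha}_{h,q}$ follows by the same computation with $\omega$ replaced by $-\omega$ (equivalently, by reflecting $x\mapsto -x$, which interchanges the left and right Riemann-Liouville derivatives). The main obstacle I expect is the high-frequency part of the bound on $\psi$: near $z=0$ the expansion is routine, but one must argue carefully that $\psi(i\theta)$ remains bounded as $|\theta|\to\infty$, and this is precisely where the extra regularity — integrability of the $(\alpha+1)$-st derivative rather than merely the $\alpha$-th — is used, the surplus factor $i\omega h$ being what upgrades the $O(1)$ size of $W_p(i\omega h)-1$ at high frequencies into the desired $O(h)$ estimate.
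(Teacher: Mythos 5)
Your proposal is correct and is essentially the standard Fourier-multiplier argument of Meerschaert and Tadjeran, which is also exactly the template the paper uses for its own Theorem 1 (there the symbol $P_{h,j}(z)=\mathrm{e}^{p_jhz}((1-\mathrm{e}^{-hz})/(hz))^{\alpha}$ plays the role of your $W_p$, the expansion is carried one order further, and the error transform is dominated by $Ch^2|i\omega|^{\alpha+2}|\hat{u}(\omega)|$ with $\mathscr{F}[{_{-\infty}}D_x^{\alpha+2}u]\in L^1$ closing the argument by Fourier inversion). Note that the paper itself does not prove Lemma 1 but cites it; your write-up supplies precisely the cited proof, including the correct identification of where the hypothesis on ${_{-\infty}}D_x^{\alpha+1}u$ enters.
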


Inspired by the shifted Gr\"{u}nwald difference operators and working along the direction of the ideas given in \cite{Tian:11}, we derive the following second order approximation for the Riemann-Liouville
fractional derivatives.
\begin{theorem}\label{thm1}
  Let $u(x)\in L^1(\mathbb{R})$, ${_{-
  \infty}}D_x^{\alpha+2}u$ and its Fourier transform belong to $L^1(\mathbb{R})$, and define the left WSGD operator by
  \begin{equation}\label{rWSGD}
  _{_L}\mathcal{D}_{h,p_{1},p_{2},\ldots,p_{m}}^{\alpha,\lambda_{1},\lambda_{2},\ldots,\lambda_{m}}u(x)=\sum_{j=1}^{m}\lambda_{j}A_{h,p_{j}}^{\alpha}u(x),
  \end{equation}
   then, for an integer $m\geq2$, we have
  \begin{equation}\label{thm01}
 _{_L}\mathcal{D}_{h,p_{1},p_{2},\ldots,p_{m}}^{\alpha,\lambda_{1},\lambda_{2},\ldots,\lambda_{m}}u(x)={_{-\infty}}D_x^{\alpha}u(x)+O(h^2),
  \end{equation}
  uniformly for $x\in\mathbb{R}$, where $p_{j},\lambda_{j}$ are real numbers and satisfy the following linear system
  \begin{equation}\label{system}
\begin{cases}
\displaystyle
\sum_{j=1}^{m}\lambda_{j}=1,\\
\displaystyle
\sum_{j=1}^{m}\lambda_{j}(p_{j}-\frac{\alpha}{2})=0.
\end{cases}
\end{equation}
\end{theorem}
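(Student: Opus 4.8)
The plan is to work on the Fourier side, exactly as in the proof of the single-shift Grünwald estimate (Lemma~\ref{lem1}) and in \cite{Tian:11}. Recall that the Fourier transform of the left Riemann--Liouville derivative is $\widehat{{_{-\infty}}D_x^{\alpha}u}(\omega)=(i\omega)^{\alpha}\widehat{u}(\omega)$. The first step is to compute the Fourier transform of a single shifted Grünwald operator: since $A_{h,p}^{\alpha}u(x)=h^{-\alpha}\sum_{k=0}^{\infty}w_k^{(\alpha)}u(x-(k-p)h)$ and $\sum_{k=0}^{\infty}w_k^{(\alpha)}z^k=(1-z)^{\alpha}$, one gets
\begin{equation*}
\widehat{A_{h,p}^{\alpha}u}(\omega)=\frac{e^{ip\omega h}(1-e^{-i\omega h})^{\alpha}}{h^{\alpha}}\,\widehat{u}(\omega)=(i\omega)^{\alpha}W_{p}(\omega h)\,\widehat{u}(\omega),
\end{equation*}
where $W_{p}(z):=e^{ipz}\bigl(\tfrac{1-e^{-iz}}{iz}\bigr)^{\alpha}$. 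Summing with weights $\lambda_j$, the symbol of $_{_L}\mathcal{D}$ is $(i\omega)^{\alpha}\sum_{j=1}^{m}\lambda_j W_{p_j}(\omega h)\,\widehat{u}(\omega)$, so the whole problem reduces to showing $\sum_{j}\lambda_j W_{p_j}(z)=1+O(z^2)$ as $z\to 0$.

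The second step is the Taylor expansion of $W_p$ near $z=0$. Writing $\tfrac{1-e^{-iz}}{iz}=1-\tfrac{iz}{2}-\tfrac{z^2}{6}+O(z^3)$ and raising to the power $\alpha$ gives $\bigl(\tfrac{1-e^{-iz}}{iz}\bigr)^{\alpha}=1-\tfrac{\alpha}{2}iz+c_2 z^2+O(z^3)$ for some explicit constant $c_2=c_2(\alpha)$ independent of $p$; multiplying by $e^{ipz}=1+ipz-\tfrac{p^2}{2}z^2+O(z^3)$ yields
\begin{equation*}
W_p(z)=1+\Bigl(p-\frac{\alpha}{2}\Bigr)iz+d_2(p)z^2+O(z^3).
\end{equation*}
Now form $\sum_j\lambda_j W_{p_j}(z)$. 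The constant term is $\sum_j\lambda_j=1$ by the first equation of \eqref{system}; the coefficient of $iz$ is $\sum_j\lambda_j(p_j-\tfrac{\alpha}{2})=0$ by the second equation of \eqref{system}; hence $\sum_j\lambda_j W_{p_j}(z)=1+O(z^2)$, where the $O(z^2)$ bound is uniform in $z$ on bounded sets because the remainder in each Taylor expansion is controlled by $|z|^3$ times a constant depending only on $\alpha$ and $\max_j|p_j|$.

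The third step converts this symbol estimate back to a uniform pointwise estimate in $x$. One writes
\begin{equation*}
_{_L}\mathcal{D}_{h,p_1,\ldots,p_m}^{\alpha,\lambda_1,\ldots,\lambda_m}u(x)-{_{-\infty}}D_x^{\alpha}u(x)=\frac{1}{2\pi}\int_{-\infty}^{\infty}e^{ix\omega}(i\omega)^{\alpha}\Bigl(\sum_{j=1}^{m}\lambda_j W_{p_j}(\omega h)-1\Bigr)\widehat{u}(\omega)\,d\omega,
\end{equation*}
and estimates the integrand by $|\omega|^{\alpha}\,|\sum_j\lambda_j W_{p_j}(\omega h)-1|\,|\widehat u(\omega)|\le C h^2|\omega|^{\alpha+2}|\widehat u(\omega)|$. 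The hypothesis that ${_{-\infty}}D_x^{\alpha+2}u$ and its Fourier transform lie in $L^1(\mathbb R)$ guarantees $|\omega|^{\alpha+2}|\widehat u(\omega)|\in L^1(\mathbb R)$, so the integral is bounded by $Ch^2$ with $C$ independent of $x$; this gives \eqref{thm01}. The only subtlety — and the step I expect to need the most care — is making the $O(z^2)$ in the symbol expansion genuinely uniform and verifying that $W_{p}(z)$ stays bounded (so that the integrand bound holds for all $\omega$, not just small $\omega h$): for $|\omega h|$ not small one still has $|W_{p_j}(\omega h)|\le$ const and $|(\omega h)^{-2}(\sum_j\lambda_j W_{p_j}(\omega h)-1)|$ bounded because $\sum_j\lambda_j W_{p_j}$ is smooth and vanishes to order $\ge 2$ at the origin while being bounded away from it. Splitting the $\omega$-integral at $|\omega|\le 1/h$ and $|\omega|>1/h$ handles both regimes and closes the argument.
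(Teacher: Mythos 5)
Your proposal is correct and follows essentially the same route as the paper's proof: Fourier transform of the shifted Gr\"unwald operators, Taylor expansion of the symbol $P_{h,j}(z)=\mathrm{e}^{p_jhz}\big(\frac{1-\mathrm{e}^{-hz}}{hz}\big)^{\alpha}$ so that the two conditions in (\ref{system}) cancel the zeroth- and first-order terms, and then the $L^1$ bound on $\mathscr{F}[{_{-\infty}}D_x^{\alpha+2}u]$ to convert the symbol estimate into a uniform $O(h^2)$ pointwise bound. Your additional remark about splitting the frequency integral at $|\omega|\le 1/h$ to make the $O(z^2)$ bound uniform for large $|\omega h|$ is a point the paper passes over silently, but it does not change the argument.
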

\begin{proof}
Using the Fourier transform of the left  Riemann-Liouville fractional derivative \cite{Podlubny:99}
\begin{equation}\label{Fourierff}
\mathscr{F}[{_{-\infty}}D_x^{\alpha}u](\omega)=(i\omega)^{\alpha}\mathscr{F}[u](\omega),
\end{equation}
and taking Fourier transform on the left hand of formula (\ref{rWSGD}), we obtain
  \begin{equation}
    \begin{aligned}\label{eq:2.12}
      \mathscr{F}[_{_L}\mathcal{D}_{h,p_{1},p_{2},\ldots,p_{m}}^{\alpha,\lambda_{1},\lambda_{2},\ldots,\lambda_{m}}u(x)](\omega)& =\sum_{j=1}^{m}\lambda_{j}\bigg(\frac{1}{h^\alpha}
      \sum_{k=0}^{\infty}w_k^{(\alpha)}\mathrm{e}^{-(k-p_j)h(i\omega)}\hat{u}(\omega)\bigg) \\
      & = (i\omega)^\alpha\sum_{j=1}^m \lambda_{j}P_{h,j}(i\omega)\hat{u}(\omega),
    \end{aligned}
  \end{equation}
  where
  \begin{equation}\label{lem04}
    P_{h,j}(z)=\mathrm{e}^{p_{j}hz}\bigg(\frac{1-\mathrm{e}^{-hz}}{hz}\bigg)^\alpha=1+(p_{j}-\frac{\alpha}{2})hz+O(|z|^2h^2),~
    z=i\omega,~i=\sqrt{-1}.
  \end{equation}
  Denoting $\mathscr{F}[_{_L}\mathcal{D}_{h,p_{1},p_{2},\ldots,p_{m}}^{\alpha,\lambda_{1},\lambda_{2},\ldots,\lambda_{m}}u(x)](\omega)=\mathscr{F}[{_{-\infty}}D_x^{\alpha}u](\omega)+\hat{\phi}(k,h)$, in view of (\ref{lem04}) and (\ref{system}), we have
  \begin{equation}
    |\hat{\phi}(\omega,h)|\le Ch^2|i\omega|^{\alpha+2}|\hat{u}(\omega)|.
  \end{equation}
  Due to $\mathscr{F}[{_{-\infty}}D_x^{\alpha+2}u](\omega)\in L^1(\mathbb{R})$, we have
  \begin{equation}
    |_{_L}\mathcal{D}_{h,p_{1},p_{2},\ldots,p_{m}}^{\alpha,\lambda_{1},\lambda_{2},\ldots,\lambda_{m}}u-{_{-\infty}}D_x^{\alpha}u|
    =|\phi|\le
    \frac{1}{2\pi}\int_{\mathbb{R}}|\hat{\phi}(\omega,h)|
    \le C\|\mathscr{F}[{_{-\infty}}D_x^{\alpha+2}u](\omega)\|_{L^1}h^2=O(h^2).
  \end{equation}
\end{proof}
With the Fourier transform of the right Riemann-Liouville fractional derivative \cite{Podlubny:99}
$$\mathscr{F}[{_{x}}D_{\infty}^{\alpha}u](\omega)=(-i\omega)^{\alpha}\mathscr{F}[u](\omega),$$
by the similar arguments to the above, we get
\begin{theorem}\label{thm1}
  Let $u(x)\in L^1(\mathbb{R})$, ${_{-
  \infty}}D_x^{\alpha+2}u$ and its Fourier transform belong to $L^1(\mathbb{R})$, and define the right WSGD operator by
   \begin{equation}\label{lWSGD}
   {_R}\mathcal{D}_{h,q_{1},q_{2},\ldots,q_{m}}^{\alpha,\gamma_{1},\gamma_{2},\ldots,\gamma_{m}}u(x)=\sum_{j=1}^{m}\gamma_{j}B_{h,q_{j}}^{\alpha}u(x),
   \end{equation}
   then, for an integer $m\geq2$, we have
  \begin{equation}\label{thm002}
    {_R}\mathcal{D}_{h,q_{1},q_{2},\ldots,q_{m}}^{\alpha,\gamma_{1},\gamma_{2},\ldots,\gamma_{m}}u(x)={_{x}}D_{\infty}^{\alpha}u(x)+O(h^2),
  \end{equation}
  uniformly for $x\in\mathbb{R}$, where $q_{j},\gamma_{j}$ are real numbers and satisfy
  \begin{equation}\label{system0}
\begin{cases}
\displaystyle
\sum_{j=1}^{m}\gamma_{j}=1,\\
\displaystyle
\sum_{j=1}^{m}\gamma_{j}(q_{j}-\frac{\alpha}{2})=0.
\end{cases}
\end{equation}
Here $\gamma_{j}$ are determined by solving the above linear system (\ref{system0}).
\end{theorem}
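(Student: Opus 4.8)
The plan is to mirror, almost verbatim, the Fourier-analytic argument used for the left WSGD operator in Theorem \ref{thm1}, the only structural change being that the symbol $(\mathrm{i}\omega)^{\alpha}$ of the left Riemann--Liouville derivative is replaced throughout by the symbol $(-\mathrm{i}\omega)^{\alpha}$ of the right one. First I would record the identity $\sum_{k=0}^{\infty}w_{k}^{(\alpha)}z^{k}=(1-z)^{\alpha}$, valid for $|z|\le 1$ when $1<\alpha\le 2$ because the Gr\"unwald weights satisfy $w_{k}^{(\alpha)}=O(k^{-\alpha-1})$ and are therefore absolutely summable on the unit circle. Applying it with $z=\mathrm{e}^{\mathrm{i}h\omega}$ and using the shift rule $\mathscr{F}[u(\cdot+c)](\omega)=\mathrm{e}^{\mathrm{i}c\omega}\hat{u}(\omega)$, a term-by-term transform gives
\begin{equation*}
\mathscr{F}[B_{h,q}^{\alpha}u](\omega)=\frac{1}{h^{\alpha}}\,\mathrm{e}^{-\mathrm{i}qh\omega}\big(1-\mathrm{e}^{\mathrm{i}h\omega}\big)^{\alpha}\hat{u}(\omega)=(-\mathrm{i}\omega)^{\alpha}\,Q_{h,q}(-\mathrm{i}\omega)\,\hat{u}(\omega),
\end{equation*}
where $Q_{h,q}(z)=\mathrm{e}^{qhz}\big((1-\mathrm{e}^{-hz})/(hz)\big)^{\alpha}$ is exactly the generating function $P_{h,j}$ appearing in (\ref{lem04}); in particular it carries the same expansion $Q_{h,q}(z)=1+(q-\tfrac{\alpha}{2})hz+O(|z|^{2}h^{2})$ near $z=0$.

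Next I would take the Fourier transform of the definition (\ref{lWSGD}) and collect terms, obtaining
\begin{equation*}
\mathscr{F}\big[{_R}\mathcal{D}_{h,q_{1},\ldots,q_{m}}^{\alpha,\gamma_{1},\ldots,\gamma_{m}}u\big](\omega)=(-\mathrm{i}\omega)^{\alpha}\Big(\sum_{j=1}^{m}\gamma_{j}+(-\mathrm{i}\omega)h\sum_{j=1}^{m}\gamma_{j}\big(q_{j}-\tfrac{\alpha}{2}\big)+O(|\omega|^{2}h^{2})\Big)\hat{u}(\omega).
\end{equation*}
The linear system (\ref{system0}) is chosen precisely to make the parenthesis equal to $1+O(|\omega|^{2}h^{2})$: the first equation removes the zeroth-order discrepancy and the second annihilates the $O(h)$ term. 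Writing $\mathscr{F}[{_R}\mathcal{D}\cdots u](\omega)=\mathscr{F}[{_x}D_{\infty}^{\alpha}u](\omega)+\hat{\psi}(\omega,h)$, this yields the pointwise estimate $|\hat{\psi}(\omega,h)|\le C h^{2}|\omega|^{\alpha+2}|\hat{u}(\omega)|$.

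Finally, since $|\omega|^{\alpha+2}|\hat{u}(\omega)|=|\mathscr{F}[{_{-\infty}}D_x^{\alpha+2}u](\omega)|$ (because $|(\pm\mathrm{i}\omega)^{\alpha+2}|=|\omega|^{\alpha+2}$), the hypothesis $\mathscr{F}[{_{-\infty}}D_x^{\alpha+2}u]\in L^{1}(\mathbb{R})$ lets me invert the transform:
\begin{equation*}
\big|{_R}\mathcal{D}_{h,q_{1},\ldots,q_{m}}^{\alpha,\gamma_{1},\ldots,\gamma_{m}}u(x)-{_x}D_{\infty}^{\alpha}u(x)\big|\le\frac{1}{2\pi}\int_{\mathbb{R}}|\hat{\psi}(\omega,h)|\,d\omega\le C\big\|\mathscr{F}[{_{-\infty}}D_x^{\alpha+2}u]\big\|_{L^{1}}h^{2},
\end{equation*}
a bound uniform in $x$, which is exactly (\ref{thm002}). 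The only points I would treat with care, rather than wave through, are the two routine technical ones that also underlie the left case: (i) justifying the interchange of the infinite sum with the Fourier integral in the computation of $\mathscr{F}[B_{h,q}^{\alpha}u]$, which follows from $u\in L^{1}(\mathbb{R})$ together with $\sum_{k}|w_{k}^{(\alpha)}|<\infty$; and (ii) fixing the principal branch of the $\alpha$-th power consistently so that $(1-\mathrm{e}^{-hz})^{\alpha}/(hz)^{\alpha}$ and $(-\mathrm{i}\omega)^{\alpha}$ combine as claimed, which is legitimate because $\mathrm{Re}(1-\mathrm{e}^{-hz})>0$ for $z=-\mathrm{i}\omega$ with $h\omega$ in a neighbourhood of $0$. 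Beyond these, the argument is a direct transcription of the proof of the left-operator theorem with $(\mathrm{i}\omega)^{\alpha}\mapsto(-\mathrm{i}\omega)^{\alpha}$.
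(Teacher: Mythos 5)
Your proposal is correct and follows essentially the same route as the paper, which proves the right-operator case simply by invoking the Fourier symbol $(-\mathrm{i}\omega)^{\alpha}$ of ${_x}D_{\infty}^{\alpha}$ and repeating the left-operator argument verbatim; your version just spells out the details (the identification of $Q_{h,q}$ with $P_{h,j}$ evaluated at $z=-\mathrm{i}\omega$, and the two technical interchange/branch points) that the paper leaves implicit.
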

\begin{remark}\label{re1}
Note that the desired second order accuracy is obtained by weighting the series expansion of the trigonometric polynomials $P_{h,j}(z)$ in frequency space, i.e.,
\begin{equation}\label{pj}
    P_{h,j}(z)=\mathrm{e}^{p_{j}hz}\bigg(\frac{1-\mathrm{e}^{-hz}}{hz}\bigg)^\alpha=
    1+(p_{j}-\frac{\alpha}{2})hz+\left(\frac{p^2_{j}}{2}-\frac{\alpha}{2}p_{j}+\frac{3\alpha^2+\alpha}{24}\right)h^2z^2+O(|z|^3h^3),~
    z=ik.
  \end{equation}
  For example,
\begin{equation}\label{p10}
     \sum_{j=1}^{3}\lambda_{j}P_{h,j}(z)=
    \sum_{j=1}^{3}\lambda_{j}+\sum_{j=1}^{3}\big(\lambda_{j}(p_{j}-\frac{\alpha}{2})\big)hz+C_{\alpha}h^2z^2+O(|z|^3h^3).
  \end{equation}
  where
  \begin{equation}\label{Mlexp2}
C_{\alpha}=
\begin{cases}
\displaystyle
\frac{\alpha-\alpha^2}{4}+\frac{3\alpha^2+\alpha}{24},~(\lambda_{1},\lambda_{2},\lambda_{3})
=(\frac{\alpha}{2},\frac{2-\alpha}{2},0),\\
\displaystyle
\frac{2-\alpha^2}{4}+\frac{3\alpha^2+\alpha}{24},(\lambda_{1},\lambda_{2},\lambda_{3})
=(\frac{2+\alpha}{4},0,\frac{2-\alpha}{4}),\\
\displaystyle
\frac{4-\alpha^2}{4}+\frac{3\alpha^2+\alpha}{24},~(\lambda_{1},\lambda_{2},\lambda_{3})
=(\frac{2+\alpha}{2},\frac{-2-\alpha}{2},1).
\end{cases}
\end{equation}
For the similar method for the classical derivatives one can refer to \cite{Fornberg:98}. And following this
idea we can get higher order accurate difference approximations for the Riemann-Liouville fractional derivatives.
In the following sections, we focus  on
the  second order difference approximations with three parameters $(\lambda_{1},\lambda_{2},\lambda_{3})$.
\end{remark}
Supposing that a function $u(x)$ has support on the finite domain $[a,b]$, then we have
\begin{equation}
    \begin{aligned}\label{eq2.7ww}
      & _aD_x^{\alpha}u(x)=~\sum_{j=1}^{m}
      \bigg[\frac{\lambda_j}{h^\alpha}\sum_{k=0}^{[\frac{x-a}{h}]+p_{j}}w_k^{(\alpha)}u(x-(k-p_{j})h)
\bigg]
      +O(h^2), \\
      & _xD_b^{\alpha}u(x)=~\sum_{j=1}^{m}
      \bigg[\frac{\gamma_j}{h^\alpha}\sum_{k=0}^{[\frac{b-x}{h}]
      +q_{j}}w_k^{(\alpha)}u(x+(k-q_{j})h)\bigg]+O(h^2).
    \end{aligned}
  \end{equation}
If the function $u(x)$ is non-periodic we are more interested in that $p_{j},q_{j}$ are integers
and satisfy $|p_{j}|\leq1,|q_{j}|\leq1,j=1,\ldots,m,$ for numerical consideration.
For the convenience, we usually take $p_{j}=q_{j},\lambda_{j}=\gamma_{j}$ in practical computations.
Here $\lambda_{j}$ are determined by solving the above system of linear equations (\ref{system}).
When $m=2$, the linear system (\ref{system}) has the unique solution
$\lambda_1=\frac{\alpha-2p_{2}}{2(p_{1}-p_{2})},~\lambda_2=\frac{2p_1-\alpha}{2(p_{1}-p_{2})}$ \cite{Tian:11}.
However, for $m\geq3$, using the knowledge of linear algebra, we know that the system (\ref{system})
has infinitely many solutions.
Now we assume that $\lambda_{1}$ is given in linear system of equations (\ref{system}) with $m=3$, then we have
 \begin{equation}\label{1parameters}
\lambda_2=\frac{2(p_{1}-p_{3})\lambda_1+2p_{3}-\alpha}{2(p_{3}-p_{2})},
~\lambda_3=\frac{2(p_{2}-p_{1})\lambda_1-2p_{2}+\alpha}{2(p_{3}-p_{2})},~p_{2}\neq p_{3}.
\end{equation}
If $\lambda_{2}$ is given in linear algebra system (\ref{system}), then we have
 \begin{equation}\label{2parameters}
\lambda_1=\frac{2(p_{2}-p_{3})\lambda_2+2p_{3}-\alpha}{2(p_{3}-p_{1})},
~\lambda_3=\frac{2(p_{1}-p_{2})\lambda_2+2p_{1}+\alpha}{2(p_{3}-p_{1})},~p_{1}\neq p_{3}.
\end{equation}
And if $\lambda_{3}$ is known, then the solution of the linear system (\ref{system}) gives
 \begin{equation}\label{3parameters}
\lambda_1=\frac{2(p_{3}-p_{2})\lambda_3-2p_{2}+\alpha}{2(p_{1}-p_{2})},~\lambda_2=\frac{2(p_{3}-p_{2})\lambda_3+2p_{2}-\alpha}{2(p_{1}-p_{2})},~p_{1}\neq p_{2}.
\end{equation}

Furthermore, if the integers $p_{j}$ satisfy $|p_{j}|\leq1,j=1,\ldots,m$, then the
 $(\lambda_{1},\lambda_{2},\lambda_{3})$-WSGD operators in finite domain $[a,b]$ can be written as the following form
 \begin{equation}
    \begin{aligned}\label{eq2.7ww}
      & _{_L}\mathcal{D}_{h,1,0,-1}^{\alpha,\lambda_{1},\lambda_{2},\lambda_{3}}u(x)=
      \frac{\lambda_1}{h^\alpha}\sum_{k=0}^{[\frac{x-a}{h}]+1}w_k^{(\alpha)}u(x-(k-1)h)
      +\frac{\lambda_2}{h^\alpha}\sum_{k=0}^{[\frac{x-a}{h}]}w_k^{(\alpha)}u(x-kh)
      +\frac{\lambda_3}{h^\alpha}\sum_{k=0}^{[\frac{x-a}{h}]-1}w_k^{(\alpha)}u(x-(k+1)h), \\
      & _{_R}\mathcal{D}_{h,1,0,-1}^{\alpha,\lambda_{1},\lambda_{2},\lambda_{3}}u(x)=
      \frac{\lambda_1}{h^\alpha}\sum_{k=0}^{[\frac{b-x}{h}]
      +1}w_k^{(\alpha)}u(x+(k-1)h)+\frac{\lambda_2}{h^\alpha}\sum_{k=0}^{[\frac{b-x}{h}]}w_k^{(\alpha)}u(x+kh)
      +\frac{\lambda_1}{h^\alpha}\sum_{k=0}^{[\frac{b-x}{h}]
      -1}w_k^{(\alpha)}u(x+(k+1)h),
    \end{aligned}
  \end{equation}
 where the parameters $\lambda_{j},j=1,2,3,$ satisfy the following linear system
 \begin{equation}\label{lambdaparameter}
\begin{cases}
\displaystyle
\lambda_{1}+\lambda_{2}+\lambda_{3}=1,\\
\displaystyle
\lambda_{1}-\lambda_{3}=\frac{\alpha}{2}.
\end{cases}
\end{equation}
With the help of the knowledge
of linear algebra, the solutions of the system of the linear algebraic equations (\ref{lambdaparameter}) can be collected by
the following three sets
 \begin{equation}\label{12parameters}
\mathcal{S}_{1}=\Big\{\lambda_2=\frac{2+\alpha}{2}-2\lambda_{1},
~\lambda_3=\lambda_1-\frac{\alpha}{2},~\lambda_1~ \textrm{is~given}\Big\},
\end{equation}
  \begin{equation}\label{02parameters}
\mathcal{S}_{2}=\Big\{\lambda_1=\frac{2+\alpha}{4}-\frac{\lambda_2}{2},
~\lambda_3=\frac{2-\alpha}{4}-\frac{\lambda_2}{2},~\lambda_2~ \textrm{is~given}\Big\},
\end{equation}
and
\begin{equation}\label{2parameters}
\mathcal{S}_{3}=\Big\{\lambda_1=\frac{\alpha}{2}+\lambda_3,
~\lambda_2=\frac{2-\alpha}{2}-2\lambda_3,~\lambda_3~ \textrm{is~given}\Big\}.
\end{equation}
It produces a second order approximation if we take any choice of $\lambda_j,j=1,2,3$ in $\mathcal{S}_{j},j=1,2,3$.
Particularly, if we take $\lambda_j=0$ for $j=1,2,3$ in $\mathcal{S}_{j},j=1,2,3$, it recovers the second order approximations presented in \cite{Tian:11}.
After rearranging the weights $w_k^{(\alpha)}$,  the second order approximations for the Riemann-Liouville fractional derivatives at point $x_{j}$ give
\begin{equation}
  \begin{aligned}\label{eq:2.7}
    & _aD_x^{\alpha}u(x_{j})=\frac{1}{h^\alpha}\sum_{k=0}^{j+1}g_k^{(\alpha)}u(x_{j-k+1})+O(h^2), \\
    & _xD_b^{\alpha}u(x_{j})=\frac{1}{h^\alpha}\sum_{k=0}^{N-j+1}g_k^{(\alpha)}u(x_{j+k-1})+O(h^2),
  \end{aligned}
\end{equation}
where the weights are
\begin{equation}\label{relation}
\begin{split}
\displaystyle
g_0^{(\alpha)}=\lambda_{1}w_0^{(\alpha)},~g_1^{(\alpha)}=\lambda_{1}w_1^{(\alpha)}+\lambda_{2}w_0^{(\alpha)},\\
\displaystyle
g_k^{(\alpha)}=\lambda_{1}w_k^{(\alpha)}+\lambda_{2}w_{k-1}^{(\alpha)}
    +\lambda_{3}w_{k-2}^{(\alpha)},~k\ge2.
\end{split}
\end{equation}

\section{Second order WSGD schemes: one dimensional case}\label{sec:3}
We first consider the initial boundary value problem of the following one dimensional advection diffusion equation
\begin{equation}\label{Problem}
  \begin{cases}
    u_{t}(x,t)+v_x \partial_{x}u(x,t)=~d_x({_{a}}D_x^{\alpha}+~{_x}D_{b}^{\alpha})u(x,t)+f(x,t),
     &   \text{$(x,  t) \in (a,b)\times [0, T]$,} \\
    u(x, 0)=u_0(x),    & \text{$x\in (a,b)$},\\
    u(a, t)=\varphi_{1}(t), u(b, t)=\varphi_{2}(t),&\text{$t\in[0, T]$},
  \end{cases}
\end{equation}
where $u(x,t)$ can stand for the concentration of a solute at a point $x$ at
time $t$ \cite{Benson:00}, $f(x,t)$ is the source term, $v_x(>0)$ and $d_x(>0)$ are, respectively, the advection and diffusion coefficients, and ${_{a}}D_x^{\alpha}$ and ${_x}D_{b}^{\alpha}$ are the left and right Riemann-Liouville
fractional derivatives of order $\alpha$, respectively, defined by (\ref{1.2}) and (\ref{1.3}) with $1<\alpha<2$.

For the numerical approximations, we define
$t_n=n\tau,n=0,1,\cdots,N_{t}$, $h=\frac{b-a}{N_{x}}$ is the equidistant grid size in
space, $x_j=a+jh$ for $j=0,1,\cdots,N_{x}$, so that $a\leq x_j\leq b$.
Denote  $u_j^n=u(x_j, t_n), \ \ f_j^{n}=f(x_j, t_{n}).$
In the space discretizations, we choose the WSGD operators ${_L}\mathcal{D}_{h,1,0,-1}^{\alpha,\lambda_{1},\lambda_{2},\lambda_{3}}u(\cdot,t)$ and ${_R}\mathcal{D}_{h,1,0,-1}^{\alpha,\lambda_{1},\lambda_{2},\lambda_{3}}u(\cdot,t)$ to approximate the Riemann-Liouville fractional derivatives ${_a}D_{x}^{\alpha}u(\cdot,t)$ and ${_x}D_{b}^{\alpha}u(\cdot,t)$, respectively. And the first order space derivative is approximated by the standard central difference. If the time derivative is approximated by the implicit Euler discretization, we have
\begin{equation*}
    \frac{u_j^{n+1}-u_j^n}{\tau}+v_x\frac{u_{j+1}^{n+1}-u_{j-1}^{n+1}}{2h}
    =d_x({_L}\mathcal{D}_{h,1,0,-1}^{\alpha,\lambda_{1},\lambda_{2},\lambda_{3}}+{_R}\mathcal{D}_{h,1,0,-1}^{\alpha,\lambda_{1},\lambda_{2},\lambda_{3}})u_j^{n+1}+f_j^{n+1}+O(\tau+h^2),
\end{equation*}
Dropping the truncation error term, we get the implicit WSGD scheme
\begin{equation}\label{imscheme}
      U_j^{n+1}=U_j^n-\tau\cdot v_x\frac{U_{j+1}^{n+1}-U_{j-1}^{n+1}}{2h}+\frac{d_x\tau}{h^{\alpha}}\sum_{k=0}^{j+1}g_k^{(\alpha)}U_{j-k+1}^{n+1}
        +\frac{d_x\tau}{h^{\alpha}}\sum_{k=0}^{N-j+1}g_k^{(\alpha)}U_{j+k-1}^{n+1}+\tau f_j^{n+1}.
\end{equation}

To improve the accuracy of the numerical scheme in time direction,
using the Crank-Nicolson time discretization, we get the following
Crank-Nicolson-WSGD scheme
\begin{equation}\label{cnscheme}
    \begin{aligned}
      & \frac{U_j^{n+1}-U_{j}^{n}}{\tau}+v_x\frac{U_{j+1}^{n}-U_{j-1}^n}{4h}
      -\left(\frac{d_x}{2h^{\alpha}}\sum_{k=0}^{j+1}g_k^{(\alpha)}U_{j-k+1}^n
        +\frac{d_x}{2h^{\alpha}}\sum_{k=0}^{N-j+1}g_k^{(\alpha)}U_{j+k-1}^n\right) \\
           &=-v_x\frac{U_{j+1}^{n+1}-U_{j-1}^{n+1}}{4h}+\frac{d_x}{2h^{\alpha}}\sum_{k=0}^{j+1}g_k^{(\alpha)}U_{j-k+1}^{n+1}
        +\frac{d_x}{2h^{\alpha}}\sum_{k=0}^{N-j+1}g_k^{(\alpha)}U_{j+k-1}^{n+1}+\frac{\tau}{2} \big(f_{j}^{n+1}+f_{j}^{n}\big).
    \end{aligned}
  \end{equation}

\subsection{Stability analysis}\label{sec:3.1}
To study the stability of the new scheme, we use the von Neumann
linear stability analysis \cite{Chan:84,Strikwerda:04}; and assume
that the solution of the problem (\ref{Problem}) can be zero
extended to the whole real line $\mathbb{R}$. Letting
$\epsilon_{j}^{n}=\tilde{U}_{j}^{n}-U_{j}^{n}$ be the perturbation
error and reasonably replacing the symbols $j+1$ and $N_{x}-j+1$ in
schemes (\ref{imscheme}) and (\ref{cnscheme})  with $\infty$ (since
$\epsilon_{j}^{n}$ should be zero beyond the considered domain),
 we obtain the following error equations
\begin{equation}\label{errimscheme}
      \epsilon_j^{n+1}=\epsilon_j^n-v_x\tau\frac{\epsilon_{j+1}^{n+1}-\epsilon_{j-1}^{n+1}}{2h}+
\frac{d_x\tau}{2h^{\alpha}}\sum_{k=0}^{\infty}g_k^{(\alpha)}\epsilon_{j-k+1}^{n+1}
        +\frac{d_x\tau}{2h^{\alpha}}\sum_{k=0}^{\infty}g_k^{(\alpha)}\epsilon_{j+k-1}^{n+1};
\end{equation}
\begin{equation}\label{errcnscheme}
    \begin{aligned}
      &\epsilon_j^{n+1}=\epsilon_{j}^{n}-v_x\tau\frac{\epsilon_{j+1}^{n}-\epsilon_{j-1}^n}{4h}
      +\left(\frac{d_x\tau}{2h^{\alpha}}\sum_{k=0}^{\infty}g_k^{(\alpha)}\epsilon_{j-k+1}^n
        +\frac{d_x\tau}{2h^{\alpha}}\sum_{k=0}^{\infty}g_k^{(\alpha)}\epsilon_{j+k-1}^n\right) \\
           &~~~~~~~~~ -v_x\tau\frac{\epsilon_{j+1}^{n+1}-\epsilon_{j-1}^{n+1}}{4h}+\frac{d_x\tau}{2h^{\alpha}}
           \sum_{k=0}^{\infty}g_k^{(\alpha)}\epsilon_{j-k+1}^{n+1}
        +\frac{d_x\tau}{2h^{\alpha}}\sum_{k=0}^{\infty}g_k^{(\alpha)}\epsilon_{j+k-1}^{n+1}.
    \end{aligned}
  \end{equation}
\begin{theorem} Let $(\lambda_{1},\lambda_{2},\lambda_{3})$ be chosen in set $\mathcal{S}_{1}$ or $\mathcal{S}_{2}$ or
$\mathcal{S}_{3}$, and assume that the trigonometric polynomial
$Q(\theta,\alpha;\lambda_{1},\lambda_{2},\lambda_{3})\leq0$, for all
$\theta\in[0,\pi]$, $1<\alpha<2$, then the implicit WSGD scheme
(\ref{imscheme}) is unconditionally von Neumann stable, where
$Q(\theta,\alpha;\lambda_{1},\lambda_{2},\lambda_{3})$ is defined by
(\ref{Q}).
\end{theorem}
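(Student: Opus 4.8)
The argument is a direct application of von Neumann (Fourier) stability analysis to the error equation (\ref{errimscheme}). I would insert the Fourier mode $\epsilon_j^n=\mu^n\mathrm{e}^{\mathrm{i}j\theta}$ into (\ref{errimscheme}) and cancel the common factor $\mathrm{e}^{\mathrm{i}j\theta}$; since every difference operator there is shift-invariant in $j$, this collapses to a single scalar identity for the growth factor $\mu=\mu(\theta)$, and von Neumann stability is exactly $|\mu(\theta)|\le1$ for all $\theta$. The central-difference advection term contributes a purely imaginary multiple of $\mu^{n+1}$, namely $-\mathrm{i}\,v_x\tau\sin\theta/h$; the two WSGD sums require the Fourier symbol of the weights $g_k^{(\alpha)}$, which I would extract from the generating function $\sum_{k\ge0}w_k^{(\alpha)}z^k=(1-z)^\alpha$ together with the rearrangement (\ref{relation}), giving $\sum_{k\ge0}g_k^{(\alpha)}z^k=(\lambda_1+\lambda_2 z+\lambda_3 z^2)(1-z)^\alpha$. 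Both series converge absolutely for $|z|\le1$ when $1<\alpha<2$, which justifies the term-by-term manipulations. Evaluating at $z=\mathrm{e}^{-\mathrm{i}\theta}$ for the left sum and $z=\mathrm{e}^{\mathrm{i}\theta}$ for the right sum, and tracking the index shifts $j-k+1$ and $j+k-1$, the two diffusion contributions come out as complex conjugates of each other, so their sum is real; accordingly I would set
\begin{equation}\label{Q}
Q(\theta,\alpha;\lambda_1,\lambda_2,\lambda_3):=(\lambda_1\mathrm{e}^{\mathrm{i}\theta}+\lambda_2+\lambda_3\mathrm{e}^{-\mathrm{i}\theta})(1-\mathrm{e}^{-\mathrm{i}\theta})^\alpha+(\lambda_1\mathrm{e}^{-\mathrm{i}\theta}+\lambda_2+\lambda_3\mathrm{e}^{\mathrm{i}\theta})(1-\mathrm{e}^{\mathrm{i}\theta})^\alpha,
\end{equation}
which is twice the real part of its first summand, hence real and even in $\theta$ (so it suffices to test $\theta\in[0,\pi]$).

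Collecting terms, the error identity becomes $\mu\bigl(1+\mathrm{i}\,v_x\tau\sin\theta/h-\tfrac{d_x\tau}{2h^\alpha}Q\bigr)=1$, whence
\begin{equation*}
|\mu(\theta)|^2=\Bigl[\bigl(1-\tfrac{d_x\tau}{2h^\alpha}Q\bigr)^2+\bigl(v_x\tau\sin\theta/h\bigr)^2\Bigr]^{-1}.
\end{equation*}
Because $d_x>0$, $\tau>0$, $h>0$ and $Q\le0$ by hypothesis, one has $1-\tfrac{d_x\tau}{2h^\alpha}Q\ge1$, so the bracketed quantity is $\ge1$ for every $\theta$ and for every choice of $\tau$ and $h$; therefore $|\mu(\theta)|\le1$ uniformly, which is precisely the von Neumann condition, and the implicit WSGD scheme is unconditionally stable. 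Note that the specific choice of $(\lambda_1,\lambda_2,\lambda_3)$ in $\mathcal{S}_1$, $\mathcal{S}_2$ or $\mathcal{S}_3$ is used only to secure the second-order consistency established in Section \ref{sec:2}; stability is carried entirely by the sign hypothesis on $Q$.

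There is no serious obstacle, only two points that need care. The first is pure bookkeeping: one must keep the shifts $j\pm k\mp 1$ and the real parameters $\lambda_1,\lambda_2,\lambda_3$ aligned correctly so that the left and right symbols are genuinely conjugate and $Q$ comes out real — a slip here produces a spurious imaginary contribution to $Q$ and destroys the clean estimate. The second, worth emphasizing, is the structural observation that the advection discretization enters $|\mu|$ only through the purely imaginary summand $\mathrm{i}\,v_x\tau\sin\theta/h$ in the denominator, which can only enlarge $\bigl|1+\mathrm{i}\,v_x\tau\sin\theta/h-\tfrac{d_x\tau}{2h^\alpha}Q\bigr|$ and hence never destabilizes the scheme; once the symbol of the $g_k^{(\alpha)}$ is in hand the proof reduces to the one-line monotonicity estimate above, so the only genuinely problem-specific content is the generating-function computation leading to (\ref{Q}).
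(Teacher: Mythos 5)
Your proof is correct and follows essentially the same route as the paper: the same von Neumann ansatz, the same symbol computation for the $g_k^{(\alpha)}$ via (\ref{relation}) and the generating function $(1-z)^\alpha$, the observation that the two diffusion contributions are conjugates so $Q$ is real and even, and the same denominator-magnitude estimate $|\rho(\theta)|^2=\bigl[(1-\tfrac{\lambda}{2}Q)^2+(\nu\sin\theta)^2\bigr]^{-1}\le1$. The only cosmetic difference is that you leave $Q$ in complex-exponential form while the paper further reduces it to the trigonometric expression (\ref{Q}) via $\mathrm{e}^{i\phi}-\mathrm{e}^{i\varphi}=2i\sin\bigl(\frac{\phi-\varphi}{2}\bigr)\mathrm{e}^{i\frac{\phi+\varphi}{2}}$; the two expressions coincide, so the sign hypothesis transfers directly.
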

\begin{proof}
  Let $\epsilon_{j}^{n}=\rho^{n}\mathrm{e}^{ij\theta}$ be the solution of  (\ref{errimscheme}),
  where $i=\sqrt{-1}$, $\rho^n$ is the amplitude at time level $n$ and $\theta(=2\pi h/k)$ is the phase angle with wavelength $k$.
  We need to show that the amplification factor $\rho(\theta)$ satisfies the relation $|\rho(\theta)|\leq1$ for all $\theta$ in $[-\pi,\pi]$. In fact,
  by substituting the expressions of $\epsilon^n_{j}$ and $\epsilon^{n+1}_{j}$ into (\ref{errimscheme}), we get the amplification factor of the implicit WSGD difference scheme
  \begin{equation}
    \rho(\theta)=\frac{1}{1+\frac{\nu}{2}\big(\mathrm{e}^{i\theta}-\mathrm{e}^{-i\theta}\big)-\frac{\lambda}{2}Q(\theta,\alpha;\lambda_{1},\lambda_{2},\lambda_{3})},
  \end{equation}
where $\nu=v_x\tau/h, \lambda=d_x\tau/h^{\alpha}$ and
  \begin{equation}
    Q(\theta,\alpha;\lambda_{1},\lambda_{2},\lambda_{3})=\sum_{k=0}^{\infty}g_k^{(\alpha)}\mathrm{e}^{i(1-k)\theta}
+\sum_{k=0}^{\infty}g_k^{(\alpha)}\mathrm{e}^{-i(1-k)\theta}.
  \end{equation}
In light of the relation (\ref{relation}), we get
  \begin{equation*}
    \begin{aligned}
     Q(\theta,\alpha;\lambda_{1},\lambda_{2},\lambda_{3}) & =\sum_{k=0}^{\infty}g_k^{(\alpha)}\mathrm{e}^{i(1-k)\theta}
              +\sum_{k=0}^{\infty}g_k^{(\alpha)}\mathrm{e}^{-i(1-k)\theta}\\
           & =\lambda_{1}\mathrm{e}^{i\theta}\sum_{k=0}^{\infty}w_k^{(\alpha)}\mathrm{e}^{-ik\theta}
              +\lambda_{2}\sum_{k=0}^{\infty}w_k^{(\alpha)}\mathrm{e}^{-ik\theta}
              +\lambda_{3}\mathrm{e}^{-i\theta}\sum_{k=0}^{\infty}w_k^{(\alpha)}\mathrm{e}^{-ik\theta} \\
           &~~~  +\lambda_{1}\mathrm{e}^{-i\theta}\sum_{k=0}^{\infty}w_k^{(\alpha)}\mathrm{e}^{ik\theta}
              +\lambda_{2}\sum_{k=0}^{\infty}w_k^{(\alpha)}\mathrm{e}^{ik\theta}
              +\lambda_{3}\mathrm{e}^{i\theta}\sum_{k=0}^{\infty}w_k^{(\alpha)}\mathrm{e}^{ik\theta} .
    \end{aligned}
  \end{equation*}
By a straightforward calculation, the trigonometric polynomial $Q(\theta,\alpha;\lambda_{1},\lambda_{2},\lambda_{3})$ gives
   \begin{equation*}
    \begin{aligned}
      Q(\theta,\alpha;\lambda_{1},\lambda_{2},\lambda_{3}) &
      =\lambda_{1}\Big(\mathrm{e}^{i\theta}(1-\mathrm{e}^{-i\theta})^\alpha
              +\mathrm{e}^{-i\theta}(1-\mathrm{e}^{i\theta})^\alpha\Big)
              +\lambda_{2}\Big((1-\mathrm{e}^{-i\theta})^\alpha+(1-\mathrm{e}^{i\theta})^\alpha\Big)\\
            &~~~  +\lambda_{3}\Big(\mathrm{e}^{-i\theta}(1-\mathrm{e}^{-i\theta})^\alpha
              +\mathrm{e}^{i\theta}(1-\mathrm{e}^{i\theta})^\alpha\Big).
    \end{aligned}
  \end{equation*}
In view of $Q(\theta,\alpha;\lambda_{1},\lambda_{2},\lambda_{3})$ is a real-valued
 and even function for given $(\lambda_{1},\lambda_{2},\lambda_{3})$, we just
consider its principal value on $[0,\pi]$. Furthermore, using the relation
\begin{equation}\label{trirelation}
\mathrm{e}^{i\phi}-\mathrm{e}^{i\varphi}=2i\sin\big(\frac{\phi-\varphi}{2}\big)\mathrm{e}^{i\frac{\phi+\varphi}{2}},
\end{equation}
after a straightforward calculation yields
\begin{equation}\label{Q}
Q(\theta,\alpha;\lambda_{1},\lambda_{2},\lambda_{3})=\big(2\sin(\frac{\theta}{2})\big)^\alpha\ \big(\lambda_{1}\cos\big(\frac{\alpha}{2}(\theta-\pi)-\theta\big)
         +\lambda_{2}\cos\big(\frac{\alpha}{2}(\theta-\pi)\big)
         +\lambda_{3}\cos\big(\frac{\alpha}{2}(\theta-\pi)+\theta\big)\big).
\end{equation}
Then the corresponding amplification factor is obtained as
 \begin{equation*}
    \rho(\theta)=\frac{1}{1+i\nu\sin(\theta)-\frac{\lambda}{2} Q(\theta,\alpha;\lambda_{1},\lambda_{2},\lambda_{3})}.
  \end{equation*}
Due to the functions $Q(\theta,\alpha;\lambda_{1},\lambda_{2},\lambda_{3})$ is non-positive, we get
 $$|\rho(\theta)|=\frac{1}{\sqrt{\Big(1-\frac{\lambda}{2} Q(\theta,\alpha;\lambda_{1},\lambda_{2},\lambda_{3})\Big)^2
 +\Big(\nu\sin(\theta)\Big)^2}}\leq1,$$
 which means that
the implicit WSGD difference scheme is unconditionally stable.
\end{proof}

\begin{theorem}
Let $(\lambda_{1},\lambda_{2},\lambda_{3})$ be chosen in set
$\mathcal{S}_{1}$ or $\mathcal{S}_{2}$ or $\mathcal{S}_{3}$, and
assume that the trigonometric polynomial
$Q(\theta,\alpha;\lambda_{1},\lambda_{2},\lambda_{3})\leq0$
 for all $\theta \in[0,\pi],1<\alpha<2$,
then the Crank-Nicolson-WSGD  scheme (\ref{cnscheme}) is unconditionally von Neumann stable,
 where $Q(\theta,\alpha;\lambda_{1},\lambda_{2},\lambda_{3})$ is defined by (\ref{Q}).
\end{theorem}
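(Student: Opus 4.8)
The plan is to mirror, essentially verbatim, the von Neumann computation just performed for the implicit scheme, the only structural change being that the Crank--Nicolson splitting produces a Cayley-type amplification factor instead of a plain reciprocal. First I would insert the Fourier mode $\epsilon_j^n=\rho^n\mathrm{e}^{ij\theta}$ into the error recursion (\ref{errcnscheme}) and divide through by $\rho^n\mathrm{e}^{ij\theta}$. On each time level separately, the central-difference advection term contributes $\mp\tfrac{\nu}{4}(\mathrm{e}^{i\theta}-\mathrm{e}^{-i\theta})=\mp\tfrac{i\nu}{2}\sin\theta$ (with $\nu=v_x\tau/h$), while the pair of fractional sums contributes, after invoking the weight relation (\ref{relation}), the Gr\"unwald series identity $\sum_{k\ge0}w_k^{(\alpha)}\mathrm{e}^{\mp ik\theta}=(1-\mathrm{e}^{\mp i\theta})^{\alpha}$, and the elementary identity (\ref{trirelation}), precisely $\tfrac{\lambda}{2}Q(\theta,\alpha;\lambda_{1},\lambda_{2},\lambda_{3})$ (with $\lambda=d_x\tau/h^{\alpha}$). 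This reduction is identical to the one already carried out for the implicit scheme; since the explicit and implicit spatial parts of (\ref{cnscheme}) are assembled from the same discrete operators, the \emph{same} polynomial $Q$ appears on both levels, which is the one point worth emphasizing.

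Collecting the level-$n$ and level-$(n+1)$ terms then yields
\begin{equation*}
\rho^{n+1}\Big(1-\tfrac{\lambda}{2}Q(\theta,\alpha;\lambda_{1},\lambda_{2},\lambda_{3})+\tfrac{i\nu}{2}\sin\theta\Big)=\rho^{n}\Big(1+\tfrac{\lambda}{2}Q(\theta,\alpha;\lambda_{1},\lambda_{2},\lambda_{3})-\tfrac{i\nu}{2}\sin\theta\Big),
\end{equation*}
so the amplification factor is
\begin{equation*}
\rho(\theta)=\frac{1+\tfrac{\lambda}{2}Q(\theta,\alpha;\lambda_{1},\lambda_{2},\lambda_{3})-\tfrac{i\nu}{2}\sin\theta}{1-\tfrac{\lambda}{2}Q(\theta,\alpha;\lambda_{1},\lambda_{2},\lambda_{3})+\tfrac{i\nu}{2}\sin\theta}.
\end{equation*}
Because $Q$ is real-valued and even in $\theta$, it is enough to work on $[0,\pi]$. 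Setting $\beta=\tfrac{\lambda}{2}Q(\theta,\alpha;\lambda_{1},\lambda_{2},\lambda_{3})$, which is $\le0$ since $\lambda>0$ and $Q\le0$ by hypothesis, one computes
\begin{equation*}
|\rho(\theta)|^{2}=\frac{(1+\beta)^{2}+\big(\tfrac{\nu}{2}\sin\theta\big)^{2}}{(1-\beta)^{2}+\big(\tfrac{\nu}{2}\sin\theta\big)^{2}}.
\end{equation*}
Since $(1-\beta)^{2}-(1+\beta)^{2}=-4\beta\ge0$, the numerator never exceeds the denominator, whence $|\rho(\theta)|\le1$ for all $\theta\in[0,\pi]$ and all $\tau,h>0$; this gives unconditional von Neumann stability.

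The argument is routine once the implicit case is in hand, and I expect no serious obstacle. The only parts needing care are bookkeeping: tracking the factor $\tfrac12$ from the Crank--Nicolson splitting so that $Q$ enters with coefficient $\tfrac{\lambda}{2}$ on each level, and verifying that the advection contribution stays purely imaginary, so that it adds the \emph{same} nonnegative quantity $\big(\tfrac{\nu}{2}\sin\theta\big)^{2}$ to both numerator and denominator and therefore cannot spoil the estimate. The single load-bearing inequality is $(1+\beta)^{2}\le(1-\beta)^{2}$, valid precisely because $\beta\le0$, i.e.\ precisely because of the standing hypothesis $Q(\theta,\alpha;\lambda_{1},\lambda_{2},\lambda_{3})\le0$ on $[0,\pi]$; establishing that this hypothesis does hold for the admissible weight triples drawn from $\mathcal{S}_{1},\mathcal{S}_{2},\mathcal{S}_{3}$ is a separate matter and not part of this proof.
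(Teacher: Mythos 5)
Your proof is correct and follows essentially the same route as the paper: substitute the Fourier mode into the error recursion (\ref{errcnscheme}), reduce the spatial sums to $\tfrac{\lambda}{2}Q$ via (\ref{relation}) and the generating-function identity, obtain the Cayley-type amplification factor, and conclude $|\rho(\theta)|\le 1$ because the purely imaginary advection symbol contributes equally to numerator and denominator while $Q\le 0$ makes $(1+\tfrac{\lambda}{2}Q)^2\le(1-\tfrac{\lambda}{2}Q)^2$. The only (immaterial) discrepancy is the coefficient of the advection term, $\tfrac{\nu}{2}\sin\theta$ in your version versus $\nu\sin\theta$ in the paper's display, which does not affect the bound since that term is purely imaginary.
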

\begin{proof} Inserting $\epsilon_{j}^{n}=\rho^{n}\mathrm{e}^{ij\theta}\,\,(i=\sqrt{-1})$ into (\ref{errcnscheme}), we get the amplification factor of Crank-Nicolson-WSGD difference scheme
  \begin{equation}
    \rho(\theta)=\frac{1-\frac{\nu}{2}\big(\mathrm{e}^{i\theta}-\mathrm{e}^{-i\theta}\big)+\frac{\lambda}{2}\big(\sum_{k=0}^{\infty}g_k^{(\alpha)}\mathrm{e}^{i(1-k)\theta}
+\sum_{k=0}^{\infty}g_k^{(\alpha)}\mathrm{e}^{-i(1-k)\theta}\big)}{1+\frac{\nu}{2}\big(\mathrm{e}^{i\theta}-\mathrm{e}^{-i\theta}\big)-\frac{\lambda}{2}\big(\sum_{k=0}^{\infty}g_k^{(\alpha)}\mathrm{e}^{i(1-k)\theta}
+\sum_{k=0}^{\infty}g_k^{(\alpha)}\mathrm{e}^{-i(1-k)\theta}\big)},
  \end{equation}
with $\nu=v_x\tau/h, \lambda=d_x\tau/h^{\alpha}$.

By direct calculation, we have
\begin{equation}\label{C}
 \rho(\theta)=
\frac{1-i\nu\sin(\theta)+\frac{\lambda}{2}
Q(\theta,\alpha;\lambda_{1},\lambda_{2},\lambda_{3})}
{1+i\nu\sin(\theta)-\frac{\lambda}{2}
Q(\theta,\alpha;\lambda_{1},\lambda_{2},\lambda_{3})},
\end{equation}
where $Q(\theta,\alpha;\lambda_{1},\lambda_{2},\lambda_{3})$ is
given by (\ref{Q}). Furthermore, using the inequality
$$\frac{(1-a)^2+b^2}{(1+a)^2+b^2}\leq1-\frac{2a}{1+a^2+b^2}\leq1,~a\geq0,~b\in \mathbb{R},$$
and noting that the function
$Q(\theta,\alpha;\lambda_{1},\lambda_{2},\lambda_{3})$ is
non-positive, we have
 $$|\rho(\theta)|^2=\frac{\Big(1+\frac{\lambda }{2}Q(\theta,\alpha;\lambda_{1},\lambda_{2},\lambda_{3})\Big)^2
 +\Big(\nu\sin(\theta)\Big)^2}{\Big(1-\frac{\lambda}{2} Q(\theta,\alpha;\lambda_{1},\lambda_{2},\lambda_{3})\Big)^2
 +\Big(\nu\sin(\theta)\Big)^2}\leq1.$$
\end{proof}
\begin{remark}\label{rea}
There do exist real parameters
$(\lambda_{1},\lambda_{2},\lambda_{3})$ in $\mathcal{S}_{1}$ or
$\mathcal{S}_{2}$ or $\mathcal{S}_{3}$ such that
$Q(\theta,\alpha;\lambda_{1},\lambda_{2},\lambda_{3})\leq0$ holds
for all $\theta \in[0,\pi],1<\alpha<2$. For example, for
$(\lambda_{1},\lambda_{2},\lambda_{3})=(\frac{\alpha}{2},\frac{2-\alpha}{2},0)$,
we have
\begin{equation}\label{Q1}
Q(\theta,\alpha)=\big(2\sin(\frac{\theta}{2})\big)^\alpha\ \big(\frac{\alpha}{2}\cos\big(\frac{\alpha}{2}(\theta-\pi)-\theta\big)
         +\frac{2-\alpha}{2}\cos\big(\frac{\alpha}{2}(\theta-\pi)\big)\big).
         \end{equation}
Here we denote
$Q(\theta,\alpha;\lambda_{1},\lambda_{2},\lambda_{3})$ by
$Q(\theta,\alpha)$ for the simpleness.
 It is easy to check that $Q(\theta,\alpha)$ decreases with respect to $\alpha$, then $Q(\theta,\alpha)\leq Q(\theta,1)=0$.
 And for $(\lambda_{1},\lambda_{2},\lambda_{3})=(\frac{2+\alpha}{4},0,\frac{2-\alpha}{4})$, we have
  \begin{equation}\label{Q2}
  Q(\theta,\alpha)=\bigg(2\sin(\frac{\theta}{2})\bigg)^\alpha\ \big(\frac{\alpha}{2}\sin\big(\frac{\alpha}{2}(\theta-\pi)\big)\sin(\theta)
         +\cos\big(\frac{\alpha}{2}(\theta-\pi)\big)\cos(\theta)\big);
 \end{equation}
it can also be checked that $Q(\theta,\alpha)$ decreases with
respect to $\alpha$, then $Q(\theta,\alpha)\leq
Q(\theta,1)=-2\sin^4(\frac{x}{2})\leq0$. For more general cases, see
the following numerical experiment section.
\end{remark}
\begin{remark}\label{rem:2}
 For $\alpha=2$, the WSGD operator (\ref{eq2.7ww})
  is the centered difference approximation of second order derivative
  when $(\lambda_{1},\lambda_{2},\lambda_{3})$ equals to $(\frac{\alpha}{2},\frac{2-\alpha}{2},0)$
   or $(\frac{2+\alpha}{4},0,\frac{2-\alpha}{4})$. The implicit WSGD scheme (\ref{imscheme})
    reduces to the classical central difference scheme  \cite{Strikwerda:04}.
\end{remark}
It is easy to check that the implicit WSGD scheme (\ref{imscheme})
and the Crank-Nicolson-WSGD scheme (\ref{cnscheme})
 are both consistent with accuracy $O(\tau+h^2)$ and $O(\tau^2+h^2)$, respectively. Using the Lax-Richtmyer equivalence theorem
 \cite{Strikwerda:04},
we get that the difference schemes (\ref{imscheme}) and
(\ref{cnscheme}) are both convergent.
\section{ Extension to two dimensional case}\label{two-dimensions}
We next consider the following two dimensional space fractional
advection diffusion equation
\begin{equation}\label{eq:4.1}
  \begin{cases}
    u_{t}(x,y,t)+v_{x} \partial_{x}u(x,y,t)+v_{y} \partial_{y}u(x,y,t)=d_x\Big({_a}D_x^{\alpha}u(x, y, t)+{_x}D_b^{\alpha}u(x, y, t)\Big)\\
    ~~~~~~~~~~~~~~~~~~  +d_y\Big({_c}D_y^{\beta}u(x, y, t)+{_y}D_d^{\beta}u(x, y, t)\Big)+f(x, y, t),  &   \text{$(x, y, t) \in \Omega\times [0, T]$,} \\
    u(x, y, 0)=u_0(x, y),    & \text{$(x, y)\in \Omega$},\\
    u(x, y, t)=\varphi(x, y, t), &\text{$(x, y, t)\in\partial\Omega\times[0, T]$},
  \end{cases}
\end{equation}
where $\Omega=(a,b)\times(c,d)$, ${_a}D_x^{\alpha}$,
 ${_x}D_b^{\alpha}$,  ${_c}D_y^{\beta}$, and ${_y}D_d^{\beta}$ are
Riemann-Liouville fractional operators with $1<\alpha, \beta \leq
2$. The diffusion coefficients satisfy $d_x, ~d_y\geq 0$ and
$v_x,v_y>0$. We assume that \eqref{eq:4.1} has an unique and
sufficiently smooth solution.
\subsection{Difference schemes}
Now we establish the Crank-Nicolson difference scheme by using WSGD
discretizations \eqref{eq:2.7} for problem \eqref{eq:4.1}. We
partition the domain $\Omega$ into a uniform mesh with the space
stepsizes $h_x=(b-a)/N_x, h_y=(d-c)/N_y$ and the time stepsize
$\tau=\frac{T}{N_{t}}$, where $N_x, N_y, N_t$ are positive integers.
And the set of grid points are denoted by $x_j=jh_x, y_m=mh_y$ and
$t_n=n\tau$ for $1\leq j\leq N_x, 1\leq m\leq N_y$ and $0\leq n\leq
N_{t}$. Let $t_{n+1/2}=(t_n+t_{n+1})/2$ for $0\leq n\leq N_{t}-1$,
and we use the following notations
\begin{equation*}
  u_{j,m}^n=u(x_j, y_m, t_n), \ \ f_{i, j}^{n+1/2}=f(x_i, y_j, t_{n+1/2}), \ \ \delta_x u_{j,m}^n=\frac{u_{j+1, m}^{n}-u_{j-1, m}^n}{2h_{x}}, \ \ \delta_y u_{j,m}^n=\frac{u_{j, m+1}^{n}-u_{j, m-1}^n}{2h_{y}}.
\end{equation*}

Discretizing \eqref{eq:4.1} in time direction by the Crank-Nicolson
scheme,  we get
\begin{equation}\label{eq:4.2}
  \begin{split}
    \frac{u_{j,m}^{n+1}-u_{j,m}^n}{\tau}&=-\frac{1}{2}\Big(v_x(\partial_{x}u)_{j,m}^{n+1}+v_y(\partial_yu)_{j,m}^{n+1}
    +v_x(\partial_{x}u)_{j,m}^{n}+v_y(\partial_{y}u)_{j,m}^{n}\Big)\\
    &+\frac{1}{2}\Big(d_x(_aD_x^{\alpha}u)_{j,m}^{n+1}+d_y(_xD_b^{\alpha}u)_{j,m}^{n+1}
    +d_x(_cD_y^{\beta}u)_{j,m}^{n+1}+d_y(_yD_d^{\beta}u)_{j,m}^{n+1}                           \\
    &+d_x(_aD_x^{\alpha}u)_{j,m}^n+d_y(_xD_b^{\alpha}u)_{j,m}^n+d_x(_cD_y^{\beta}u)_{j,m}^n+d_y(_yD_d^{\beta}u)_{j,m}^n\Big)+f_{j,m}^{n+1/2}+O(\tau^2).
  \end{split}
\end{equation}
In space discretizations, we choose the WSGD operators
${_L}\mathcal{D}_{h_{x},1,0,-1}^{\alpha,\lambda_{1},\lambda_{2},\lambda_{3}}u$,
${_R}\mathcal{D}_{h_{x},1,0,-1}^{\alpha,\lambda_{1},\lambda_{2},\lambda_{3}}u$,
${_L}\mathcal{D}_{h_{y},1,0,-1}^{\beta,\lambda_{1},\lambda_{2},\lambda_{3}}u$,
and
${_R}\mathcal{D}_{h_{y},1,0,-1}^{\beta,\lambda_{1},\lambda_{2},\lambda_{3}}u$
to respectively approximate the fractional diffusion terms
$_aD_x^{\alpha}u, ~_xD_b^{\alpha}u$, $_cD_y^{\beta}u$, and
$_yD_d^{\beta}u$. And multiplying \eqref{eq:4.2} by $\tau$ and
separating the time layers, we have that
\begin{equation}\label{eq:4.3}
  \begin{split}
    &\Big(1-\frac{d_x\tau}{2}{_L}\mathcal{D}_{h_{x},1,0,-1}^{\alpha,\lambda_{1},\lambda_{2},\lambda_{3}}
        -\frac{d_y\tau}{2}{_R}\mathcal{D}_{h_{x},1,0,-1}^{\alpha,\lambda_{1},\lambda_{2},\lambda_{3}}
        -\frac{d_x\tau}{2}{_L}\mathcal{D}_{h_{y},1,0,-1}^{\beta,\lambda_{1},\lambda_{2},\lambda_{3}}
        -\frac{d_y\tau}{2}{_R}\mathcal{D}_{h_{y},1,0,-1}^{\alpha,\lambda_{1},\lambda_{2},\lambda_{3}}
        +\frac{v_x\tau}{2}\delta_{x}+\frac{v_y\tau}{2}\delta_{y}\Big)u_{j,m}^{n+1} \\
    &=\Big(1+\frac{d_x\tau}{2}{_L}\mathcal{D}_{h_{x},1,0,-1}^{\alpha,\lambda_{1},\lambda_{2},\lambda_{3}}
        +\frac{d_y\tau}{2}{_R}\mathcal{D}_{h_{x},1,0,-1}^{\alpha,\lambda_{1},\lambda_{2},\lambda_{3}}
        +\frac{d_x\tau}{2}{_L}\mathcal{D}_{h_{y},1,0,-1}^{\beta,\lambda_{1},\lambda_{2},\lambda_{3}}
        +\frac{d_y\tau}{2}{_R}\mathcal{D}_{h_{y},1,0,-1}^{\alpha,\lambda_{1},\lambda_{2},\lambda_{3}}
        -\frac{v_x\tau}{2}\delta_{x}-\frac{v_y\tau}{2}\delta_{y}\Big)u_{j,m}^{n}
        \\
    &+\tau f_{j,m}^{n+1/2}+\tau T_{j,m}^{n},
  \end{split}
\end{equation}
where $|T_{j,m}^{n}|\leq \tilde{c}(\tau^2+h^2)$ denotes the
truncation error. And we denote
\begin{equation*}
  \delta_x^\alpha=d_x({_L}\mathcal{D}_{h_{x},1,0,-1}^{\alpha,\lambda_{1},\lambda_{2},\lambda_{3}}
  +{_R}\mathcal{D}_{h_{x},1,0,-1}^{\alpha,\lambda_{1},\lambda_{2},\lambda_{3}})+v_{x}\delta_{x}, \qquad
  \delta_y^\beta=d_y({_L}\mathcal{D}_{h_{y},1,0,-1}^{\beta,\lambda_{1},\lambda_{2},\lambda_{3}}
  +{_R}\mathcal{D}_{h_{y},1,0,-1}^{\beta,\lambda_{1},\lambda_{2},\lambda_{3}})+v_{y}\delta_{y}.
\end{equation*}
For the simplicity, the stepsizes are chosen as the same, i.e.,
$h=h_x=h_y$. Using the Taylor-series expansions, with the similar
argument presented in \cite{Meerschaert:06b}, we can check that
\begin{equation}\label{eq:4.4}
  \begin{split}
    \frac{\tau^2}{4}\delta_x^{\alpha}\delta_x^{\beta}(u_{j,m}^{n+1}-u_{j,m}^{n})
    &=\frac{\tau^2}{4}\delta_x^{\alpha}\delta_x^{\beta}(u^{n+1/2}_t)+O(\tau^5)
        \\
    &=\frac{\tau^3}{4}\Big(d_x({_a}D_x^{\alpha}+{_x}D_{b}^{\alpha}-\partial_{x})d_y({_c}D_{y}^{\beta}
 +{_y}D_{d}^{\beta}-\partial_{y})u_t\Big)_{j,m}^{n+1/2}+O(\tau^5+\tau^3h^2).
  \end{split}
\end{equation}
Adding \eqref{eq:4.4} to the right-hand side of \eqref{eq:4.3} and
making the factorization leads to
\begin{equation}
  \begin{split}\label{eq:4.6}
    \Big(1-\frac{\tau}{2}\delta_x^\alpha\Big)\Big(1-\frac{\tau}{2}\delta_y^\beta\Big)u_{j,m}^{n+1}
    = \Big(1+\frac{\tau}{2}\delta_x^\alpha\Big)\Big(1+\frac{\tau}{2}\delta_y^\beta\Big)u_{j,m}^n
    +\tau f_{i,j}^{n+1/2}+\tau T_{j,m}^{n}+O(\tau^3+\tau^3h^2).
  \end{split}
\end{equation}
Dropping the truncation error terms, we obtain the finite difference
approximation
\begin{equation}
  \begin{split}\label{eq:4.7}
    & \Big(1-\frac{\tau}{2}\delta_x^\alpha\Big)\Big(1-\frac{\tau}{2}\delta_y^\beta\Big)U_{j,m}^{n+1}
    = \Big(1+\frac{\tau}{2}\delta_x^\alpha\Big)\Big(1+\frac{\tau}{2}\delta_y^\beta\Big)U_{j,m}^n+\tau f_{j,m}^{n+1/2}.
  \end{split}
\end{equation}
For efficiently solving (\ref{eq:4.7}), we use the following three
splitting schemes:

 Peaceman-Rachford ADI scheme \cite{Peaceman:59}:
\begin{subequations}\label{eq:4.16}
\begin{align}
    &\Big(1-\frac{\tau}{2}\delta_x^\alpha\Big)V_{j,m}^{n}~~
     =\Big(1+\frac{\tau}{2}\delta_y^\beta\Big)U_{j,m}^{n}+\frac{\tau}{2}f_{j,m}^{n+1/2}, \\
    &\Big(1-\frac{\tau}{2}\delta_y^\beta\Big)U_{j,m}^{n+1}
     =\Big(1+\frac{\tau}{2}\delta_x^\alpha\Big)V_{j,m}^{n}+\frac{\tau}{2}f_{j,m}^{n+1/2}.
\end{align}
\end{subequations}
Douglas ADI scheme \cite{Sun:05}:
\begin{subequations}\label{eq:4.17}
\begin{align}
    &\Big(1-\frac{\tau}{2}\delta_x^\alpha\Big)V_{j,m}^{n}~~=
     \Big(1+\frac{\tau}{2}\delta_x^\alpha+\tau\delta_y^\beta\Big)U_{j,m}^{n}+\tau f_{j,m}^{n+1/2}, \\
    &\Big(1-\frac{\tau}{2}\delta_y^\beta\Big)U_{j,m}^{n+1}
     =V_{j,m}^n-\frac{\tau}{2}\delta_y^\beta U_{j,m}^n.
\end{align}
\end{subequations}
D'Yakonov ADI scheme \cite{Yakonov:64}:
\begin{subequations}\label{eq:4.18}
\begin{align}
    &\Big(1-\frac{\tau}{2}\delta_x^\alpha\Big)V_{j,m}^{n}~~=\Big(1+\frac{\tau}{2}\delta_x^\alpha\Big)
     \Big(1+\frac{\tau}{2}\delta_y^\beta\Big)U_{j,m}^{n}+\tau f_{j,m}^{n+1/2}, \\
    &\Big(1-\frac{\tau}{2}\delta_y^\beta\Big)U_{j,m}^{n+1}=V_{j,m}^n.
\end{align}
\end{subequations}
\subsection{Stability}
Now we consider the stability and convergence analysis for the
CN-WSGD scheme \eqref{eq:4.7}. To study the stability of the new scheme, we also use the von Neumann linear stability analysis.
\begin{theorem}\label{thm:6}
Let $(\lambda_{1},\lambda_{2},\lambda_{3})$ and $(\gamma_{1},\gamma_{2},\gamma_{3})$ be chosen in set $\mathcal{S}_{1}$ or $\mathcal{S}_{2}$ or
$\mathcal{S}_{3}$,
and assume that the trigonometric polynomials $Q(\theta_{x},\alpha;\lambda_{1},\lambda_{2},\lambda_{3})\leq0,
~Q(\theta_{x},\beta;\gamma_{1},\gamma_{2},\gamma_{3})\leq0 $,
 for all $\theta_x, \theta_y\in[0,\pi],1<\alpha,\beta<2$, then the difference scheme \eqref{eq:4.7} is unconditionally stable for $1<\alpha,\beta\le2$,
where $Q(\theta,\alpha;\lambda_{1},\lambda_{2},\lambda_{3})$ and $Q(\theta_{x},\beta;\gamma_{1},\gamma_{2},\gamma_{3})$ are defined by (\ref{Qx}) and (\ref{Qy}), respectively.
\end{theorem}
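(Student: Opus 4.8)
The plan is to run a two-dimensional von Neumann analysis on the factored scheme, reducing it to the one-dimensional estimates already obtained in Section~\ref{sec:3.1}. First I would zero-extend the numerical solution to the whole lattice $\{(x_j,y_m):(j,m)\in\mathbb{Z}^2\}$ and, as in Section~\ref{sec:3.1}, replace the truncated Gr\"unwald sums in \eqref{eq:4.7} by infinite ones (legitimate because the perturbation $\epsilon_{j,m}^{n}=\tilde U_{j,m}^{n}-U_{j,m}^{n}$ vanishes outside the computational domain). Since the scheme is linear and $f_{j,m}^{n+1/2}$ enters only additively, the error satisfies the homogeneous recursion
\begin{equation*}
  \Big(1-\tfrac{\tau}{2}\delta_x^\alpha\Big)\Big(1-\tfrac{\tau}{2}\delta_y^\beta\Big)\epsilon_{j,m}^{n+1}
  =\Big(1+\tfrac{\tau}{2}\delta_x^\alpha\Big)\Big(1+\tfrac{\tau}{2}\delta_y^\beta\Big)\epsilon_{j,m}^{n}.
\end{equation*}
The key structural remark is that $\delta_x^\alpha=d_x\big({_L}\mathcal{D}_{h_x,1,0,-1}^{\alpha,\lambda_1,\lambda_2,\lambda_3}+{_R}\mathcal{D}_{h_x,1,0,-1}^{\alpha,\lambda_1,\lambda_2,\lambda_3}\big)+v_x\delta_x$ involves only shifts of the index $j$, while $\delta_y^\beta$ involves only shifts of $m$; hence the two operators commute and are simultaneously diagonalized by a separable Fourier mode.

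Next I would substitute $\epsilon_{j,m}^{n}=\rho^{n}\mathrm{e}^{i(j\theta_x+m\theta_y)}$ with $i=\sqrt{-1}$ and $\theta_x,\theta_y\in[-\pi,\pi]$. By exactly the manipulations used in the proof of the implicit WSGD theorem in Section~\ref{sec:3.1} --- rearranging the weights via \eqref{relation}, summing $\sum_{k\ge0}w_k^{(\alpha)}\mathrm{e}^{\mp ik\theta_x}=(1-\mathrm{e}^{\mp i\theta_x})^\alpha$, and applying \eqref{trirelation} --- the operator $\tfrac{\tau}{2}\delta_x^\alpha$ acts on this mode as multiplication by a complex number $S_x$ with
\begin{equation*}
  \mathrm{Re}\,S_x=\tfrac{\lambda_x}{2}\,Q(\theta_x,\alpha;\lambda_1,\lambda_2,\lambda_3),\qquad
  \mathrm{Im}\,S_x=\pm\tfrac{\nu_x}{2}\sin\theta_x,\qquad
  \lambda_x=\tfrac{d_x\tau}{h^\alpha},\ \ \nu_x=\tfrac{v_x\tau}{h},
\end{equation*}
(the sign of the imaginary part being immaterial below, since only $|\mathrm{Im}\,S_x|$ enters), and $\tfrac{\tau}{2}\delta_y^\beta$ acts as multiplication by $S_y$ with $\mathrm{Re}\,S_y=\tfrac{\lambda_y}{2}Q(\theta_y,\beta;\gamma_1,\gamma_2,\gamma_3)$, $\mathrm{Im}\,S_y=\pm\tfrac{\nu_y}{2}\sin\theta_y$, $\lambda_y=d_y\tau/h^\beta$, $\nu_y=v_y\tau/h$ (recall $h=h_x=h_y$). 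Therefore the amplification factor of \eqref{eq:4.7} factorizes:
\begin{equation*}
  \rho(\theta_x,\theta_y)=\frac{1+S_x}{1-S_x}\cdot\frac{1+S_y}{1-S_y},
\end{equation*}
which is well defined since $\mathrm{Re}(1-S_x)=1-\tfrac{\lambda_x}{2}Q(\theta_x,\alpha;\lambda_1,\lambda_2,\lambda_3)\ge1$ and likewise for $1-S_y$.

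Finally I would bound each factor on its own. Writing $S_x=a_x+ib_x$, the assumptions $d_x>0$, $\tau,h>0$ and $Q(\theta_x,\alpha;\lambda_1,\lambda_2,\lambda_3)\le0$ force $a_x\le0$, so that
\begin{equation*}
  |1-S_x|^2-|1+S_x|^2=(1-a_x)^2-(1+a_x)^2=-4a_x\ge0,
\end{equation*}
i.e. $|1+S_x|\le|1-S_x|$; the same computation with $Q(\theta_y,\beta;\gamma_1,\gamma_2,\gamma_3)\le0$ gives $|1+S_y|\le|1-S_y|$. Multiplying the two inequalities yields $|\rho(\theta_x,\theta_y)|\le1$ for all $\theta_x,\theta_y$, and since no restriction on $\tau$ or $h$ was used, scheme \eqref{eq:4.7} is unconditionally von Neumann stable; consistency of order $O(\tau^2+h^2)$ together with the Lax--Richtmyer theorem then also gives convergence.

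The step that really carries the argument is the first reduction --- confirming that each directional operator splits into a real, even, non-positive diffusion symbol equal to the one-dimensional $Q$ plus a purely imaginary advection symbol --- after which the elementary estimate $(1-a)^2+b^2\ge(1+a)^2+b^2$ (already exploited for the one-dimensional Crank--Nicolson scheme in Section~\ref{sec:3.1}) finishes everything. The genuinely new ingredient over the one-dimensional case is merely the product structure of $\rho$, and that is harmless because a product of factors of modulus $\le1$ again has modulus $\le1$; verifying the analogous bounds for the ADI splittings \eqref{eq:4.16}--\eqref{eq:4.18}, if desired, would proceed the same way once one tracks the extra $O(\tau^3)$ perturbation relating them to \eqref{eq:4.7}.
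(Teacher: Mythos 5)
Your proposal is correct and follows essentially the same route as the paper: substitute the separable Fourier mode $\epsilon_{j,m}^{n}=\rho^{n}\mathrm{e}^{i(j\theta_x+m\theta_y)}$ into the homogeneous error equation, observe that the amplification factor factorizes into the two one-dimensional Crank--Nicolson factors, and bound each by $1$ using the non-positivity of $Q$ via the elementary inequality $(1+a)^2+b^2\le(1-a)^2+b^2$ for $a\le0$. The extra justifications you supply (commutativity of the directional operators, the explicit symbol $S_x$, and the remark that constant factors in the imaginary part are immaterial) are consistent with, and slightly more detailed than, the paper's argument.
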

\begin{proof}
  Let  $\epsilon_{j,m}^{n}=\rho^{n}\mathrm{e}^{i\theta_{x}j+i\theta_{y}m}$ where $i=\sqrt{-1}$, $\rho^{n}$ is the amplitude at time level $n$; $\theta_{x}=2\pi h_{x}/k_{x}$ and $\theta_{y}=2\pi h_{y}/k_{y}$ are the phase angles with wavelengths $k_{x}$ and $k_{y}$, respectively; the amplification
factor $G(\theta_{x},\theta_{y})=\rho^{n+1}/\rho^{n}$; so if $|G(\theta_{x},\theta_{y})|\leq1$ for all $\theta_{x}$ and $\theta_{y}$ in $[-\pi,\pi]$, then the numerical scheme is stable. Substituting the expressions of
 $\epsilon_{j,m}^{n}$ and $\epsilon_{j,m}^{n+1}$ into the following error equation
 \begin{equation}
  \begin{split}\label{eq:4.7w}
    & \Big(1-\frac{\tau}{2}\delta_x^\alpha\Big)\Big(1-\frac{\tau}{2}\delta_y^\beta\Big)\epsilon_{j,m}^{n+1}
    = \Big(1+\frac{\tau}{2}\delta_x^\alpha\Big)\Big(1+\frac{\tau}{2}\delta_y^\beta\Big)\epsilon_{j,m}^n,
  \end{split}
\end{equation}
then we have the amplification factor
\begin{equation}\label{eq:4.13}
    G(\theta_{x},\theta_{y})=
\frac{\Big(1-i\nu_{x}\sin(\theta_{x})+\frac{\lambda_{x}}{2} Q(\theta_{x},\alpha;\lambda_{1},\lambda_{2},\lambda_{3})\Big)
\Big(1-i\nu_{y}\sin(\theta_{y})+\frac{\lambda_{y}}{2} Q(\theta_{y},\beta;\gamma_{1},\gamma_{2},\gamma_{3})\Big)}
{\Big(1+i\nu_{x}\sin(\theta_{x})-\frac{\lambda_{x}}{2} Q(\theta_{x},\alpha;\lambda_{1},\lambda_{2},\lambda_{3})\Big)
\Big(1+i\nu_{y}\sin(\theta_{y})-\frac{\lambda_{y}}{2} Q(\theta_{y},\beta;\gamma_{1},\gamma_{2},\gamma_{3})\Big)},
  \end{equation}
  where
  \begin{equation}\label{Qx}
Q(\theta_{x},\alpha;\lambda_{1},\lambda_{2},\lambda_{3})=\big(2\sin(\frac{\theta_{x}}{2})\big)^\alpha\ \big(\lambda_{1}\cos\big(\frac{\alpha}{2}(\theta_{x}-\pi)-\theta_{x}\big)
         +\lambda_{2}\cos\big(\frac{\alpha}{2}(\theta_{x}-\pi)\big)
         +\lambda_{3}\cos\big(\frac{\alpha}{2}(\theta_{x}-\pi)+\theta_{x}\big)\big),
\end{equation}
and
\begin{equation}\label{Qy}
Q(\theta_{y},\beta;\gamma_{1},\gamma_{2},\gamma_{3})=\big(2\sin(\frac{\theta_{y}}{2})\big)^\beta\ \big(\gamma_{1}\cos\big(\frac{\beta}{2}(\theta_{y}-\pi)-\theta_{y}\big)
         +\gamma_{2}\cos\big(\frac{\beta}{2}(\theta_{y}-\pi)\big)
         +\gamma_{3}\cos\big(\frac{\beta}{2}(\theta_{y}-\pi)+\theta_{y}\big)\big).
\end{equation}
If the parameters $(\gamma_{1},\gamma_{2},\gamma_{3})$ and $(\lambda_{1},\lambda_{2},\lambda_{3})$ are chosen such that $Q(\theta_{x},\alpha;\lambda_{1},\lambda_{2},\lambda_{3})$ and $Q(\theta_{y},\beta;\gamma_{1},\gamma_{2},\gamma_{3})$ are all non-positive, then  $|\rho(\theta_{x})|\leq1$ and $|\rho(\theta_{y})|\leq1$ for all $\theta_{x}\in[0,\pi]$ and $\theta_{y}\in[0,\pi]$, respectively.
So it holds that
\begin{equation}\label{Qxy}
|G(\theta_{x},\theta_{y})|\leq|\rho(\theta_{x})||\rho(\theta_{y})|\leq1,
\end{equation}
 which means that \eqref{eq:4.7} is unconditionally stable.
\end{proof}

\section{Numerical Experiments}\label{Numericalsec}
In this section, we demonstrate the performance of
the proposed new second order WSGD schemes for space fractional advection diffusion equation. The numerical errors are measured by the
pointwise maximum norm,
$$\|e\|_{\infty}=\max_{1\leq n\leq N_{t}-1}\{\max_{1\leq j\leq N_{x}-1}\max_{\,\,1\leq m\leq N_{y}-1}\{|u(x_j,y_m,t_n)-U_{j,m}^{n}|\}\}.$$
To confirm the convergent orders and show the accuracy of the numerical schemes, both one and two dimensional space fractional advection diffusion equations are numerically computed. As we have discussed in the previous sections,
the implicit WSGD scheme or Crank-Nicolson-WSGD scheme is
unconditionally stable if the trigonometric polynomials
$Q(\theta,\alpha)$ (for the simplicity, both $Q(\theta,\alpha;\lambda_{1},\lambda_{2},\lambda_{3})$ and $Q(\theta,\alpha;\gamma_{1},\gamma_{2},\gamma_{3})$ are breviated as $Q(\theta,\alpha)$) are non-positive for any $\theta \in [0,\pi]$.
However, it seems not easy to analytically get the effective regions for the parameters $(\lambda_{1},\lambda_{2},\lambda_{3})$ or $(\gamma_{1},\gamma_{2},\gamma_{3})$ from $Q(\theta,\alpha)$. Fortunately, these can be easily done by numerical test. For any $\alpha \in (1,2)$, by numerical test, we find that $Q(\theta,\alpha)$ is non-positive when $\lambda_3\in [-0.005,2.0]$ in set
$\mathcal{S}_{3}$ or $\lambda_2\in [-4,0.5]$ in set $\mathcal{S}_{2}$ or $\lambda_1\in [0.75,3]$ in set $\mathcal{S}_{1}$.  In the
following numerical computations, we choose the parameters
$(\lambda_{1},\lambda_{2},\lambda_{3})$ or $(\gamma_{1},\gamma_{2},\gamma_{3})$ in the above intervals.  The maximum norm errors and their convergence rates to
Example \ref{exm:1} approximated by the implicit WSGD
 scheme and Crank-Nicolson-WSGD  scheme are presented in Tables \ref{tab:1}-\ref{tab:6}.


\begin{example}\label{exm:1}
   We first consider the following one dimensional problem
  \begin{equation}\label{exm21}
    \begin{split}
      & u_{t}(x,t)+u_{x}(x,t)={_0}D_x^{\alpha}u(x,t)+{_x}D_1^{\alpha}u(x,t)
        +f(x,t), \quad (x,t)\in (0,1)\times(0,1],\\
      & u(0,t)=u(1,t)=0, \quad t\in (0, 1],\\
      & u(x,0)=x^3(2-x)^3, \quad x\in (0, 1),
    \end{split}
  \end{equation}
  with the source term
  \begin{equation*}
    \begin{split}
    f(x,t)=\mathrm{e}^{-t}\Big(-x^3(1-x)^3+\big(3x^2(1-x)^3-3x^3(1-x)^2\big)-\frac{\Gamma(4)}{\Gamma(4-\alpha)}\big(x^{3-\alpha}+(1-x)^{3-\alpha}\big)
                    \\
                   +\frac{3\Gamma(5)}{\Gamma(5-\alpha)}\big(x^{4-\alpha}+(1-x)^{4-\alpha}\big) -\frac{3\Gamma(6)}{\Gamma(6-\alpha)}\big(x^{5-\alpha}+(1-x)^{5-\alpha}\big)
                     +\frac{\Gamma(7)}{\Gamma(7-\alpha)}\big(x^{6-\alpha}+(1-x)^{6-\alpha}\big)\Big).
    \end{split}
  \end{equation*}
  It is easy to check that the exact solution of (\ref{exm21}) is $u(x,t)=\mathrm{e}^{-t}x^3(1-x)^3$.
\end{example}
\begin{figure}[h]
\centering
    \subfigure[$\lambda_{1}=0.75$]{
        \label{fig:mini:subfig:a} 
        \begin{minipage}[b]{0.35\textwidth}
            \centering
            \includegraphics[scale=.4]{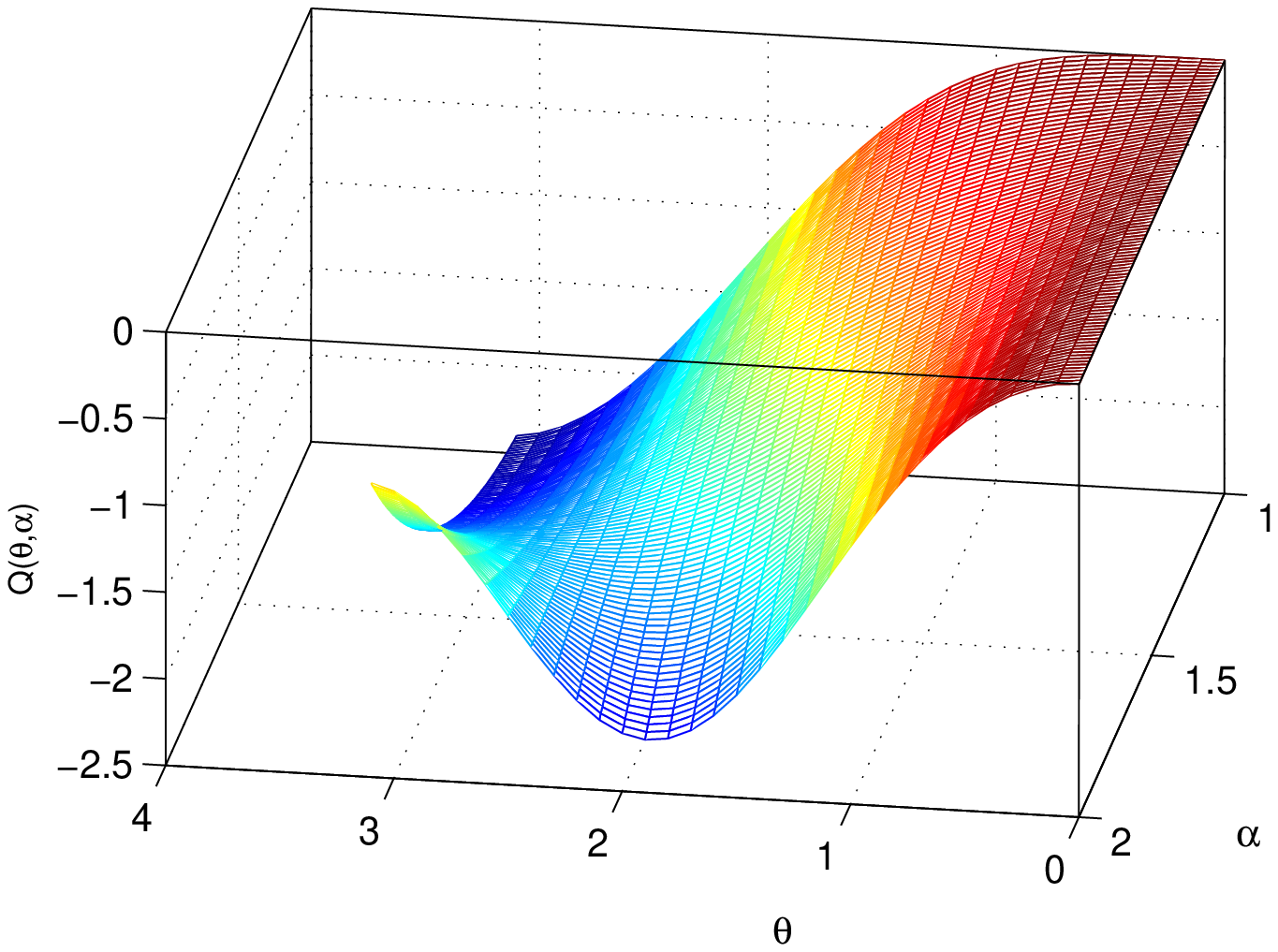}
        \end{minipage}}
\subfigure[$\lambda_{1}=1$]{
        \label{fig:mini:subfig:a} 
        \begin{minipage}[b]{0.35\textwidth}
            \centering
            \includegraphics[scale=.4]{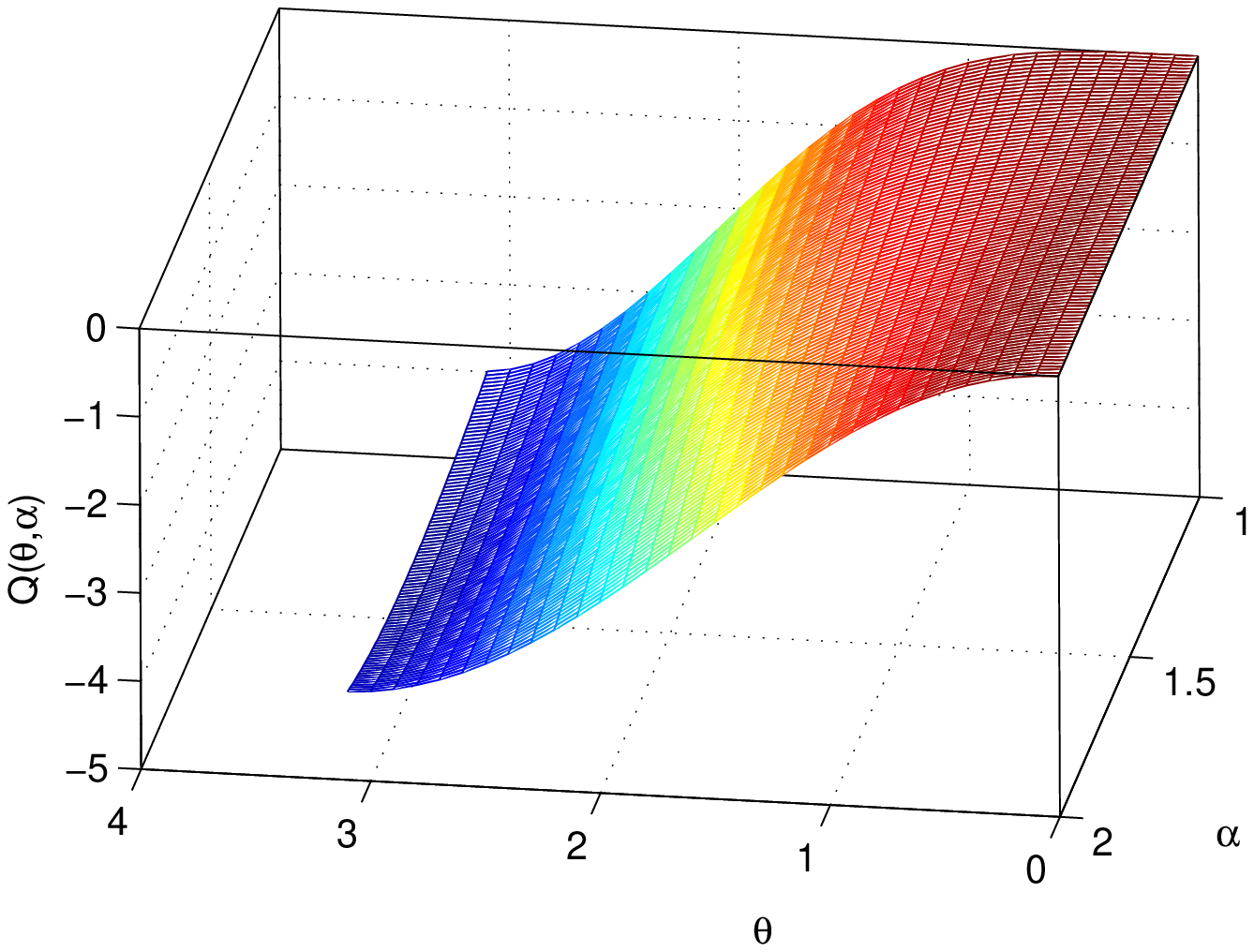}
        \end{minipage}}\\
    \subfigure[$\lambda_{1}=1.2$]{
        \label{fig:mini:subfig:b} 
        \begin{minipage}[b]{0.35\textwidth}
            \centering
            \includegraphics[scale=.4]{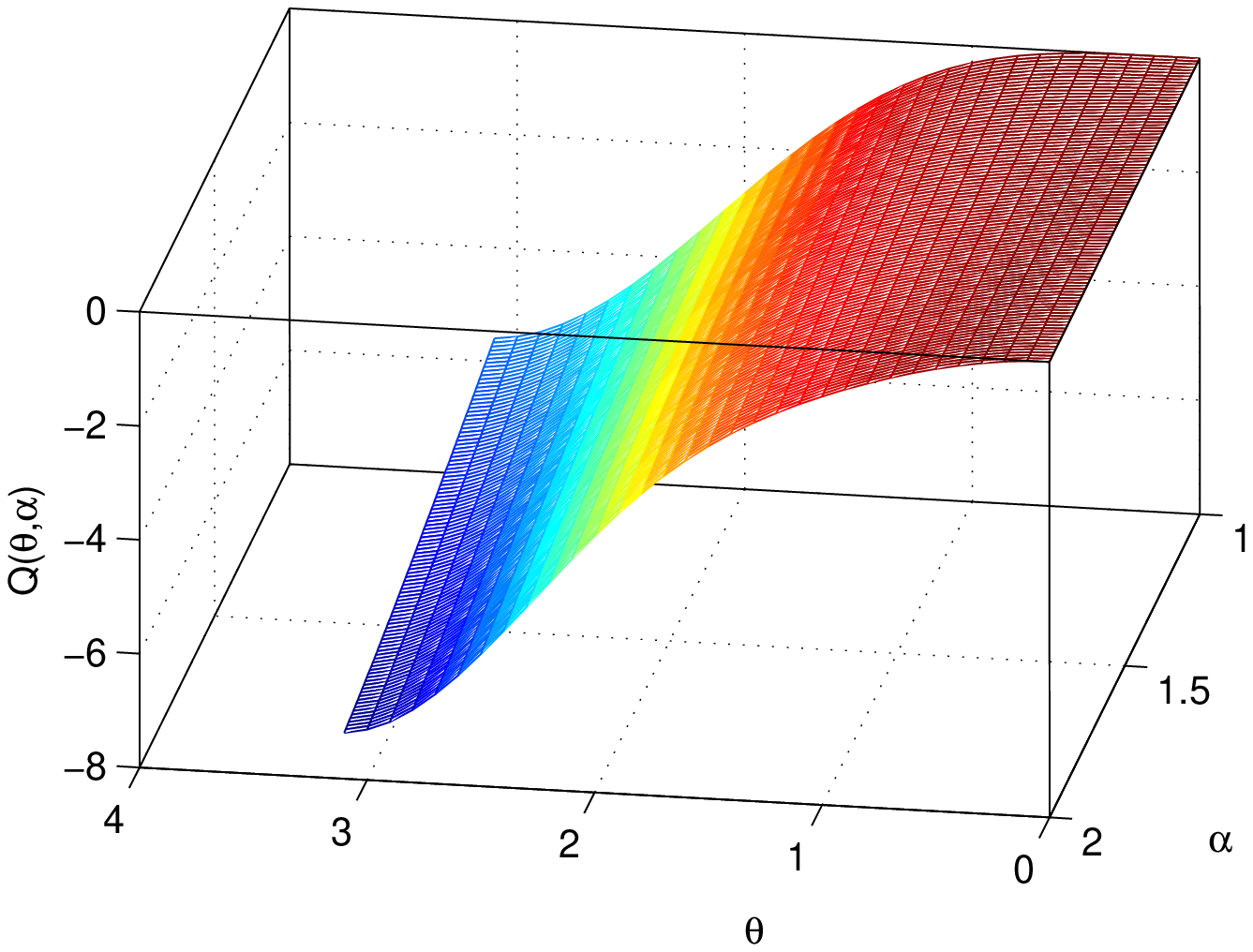}
        \end{minipage}}
    \subfigure[$\lambda_{1}=1.5$]{
        \label{fig:mini:subfig:a} 
        \begin{minipage}[b]{0.35\textwidth}
            \centering
            \includegraphics[scale=.4]{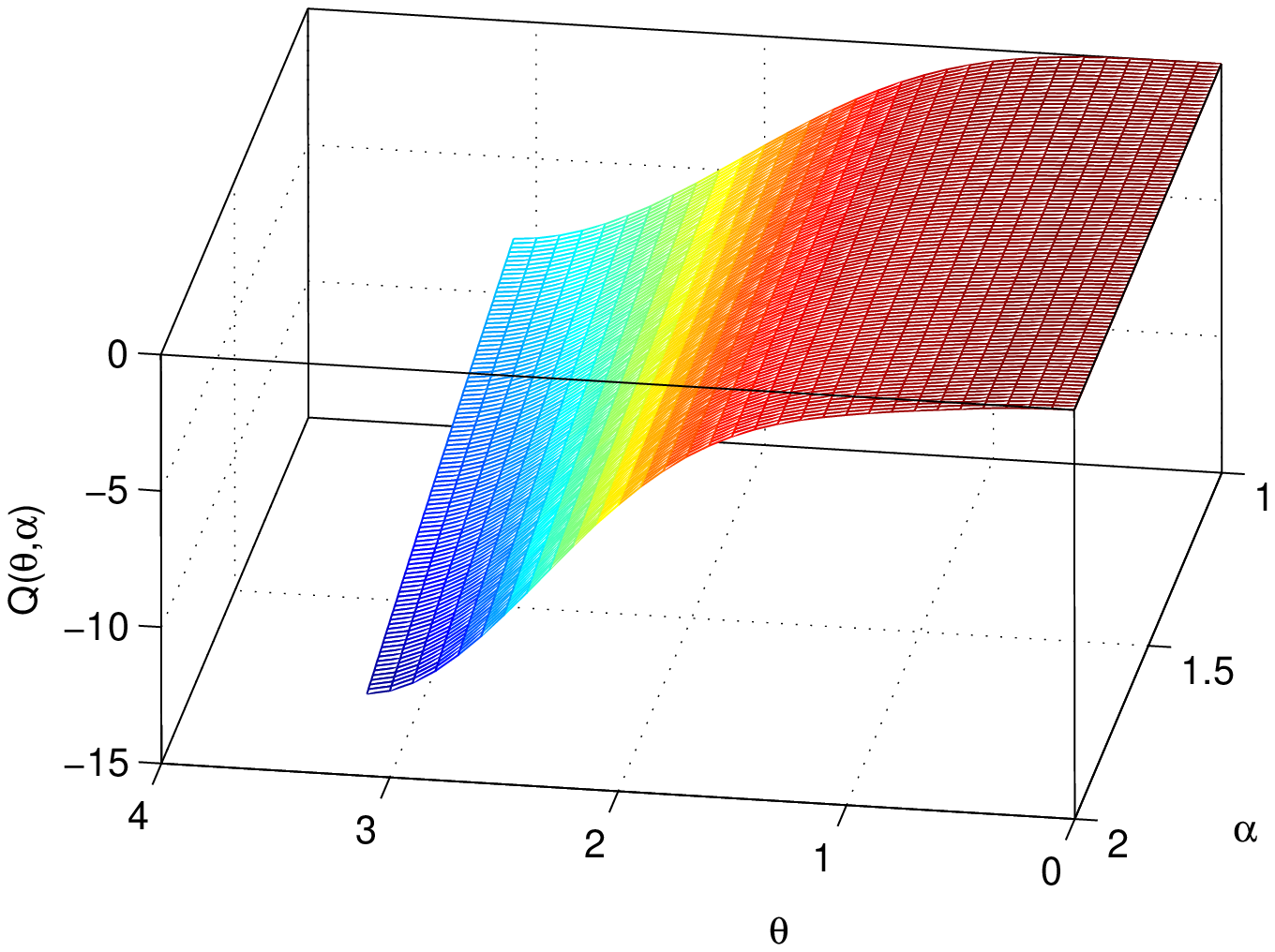}
        \end{minipage}}
    \caption{The trigonometric polynomials $Q(\theta,\alpha)$ in amplification factors $\rho(\theta)$ and the parameters $(\lambda_1,\lambda_2,\lambda_3)$ selected in set
     $\mathcal{S}_{1}=\Big\{~\lambda_1~ \textrm{is~given},\lambda_2=\frac{2+\alpha}{2}-2\lambda_{1},~\lambda_3=\lambda_1-\frac{\alpha}{2}\Big\}$.}
    \label{fig1} 
\end{figure}

\begin{table}[H]\fontsize{10pt}{12pt}\selectfont
  \begin{center}
  \caption{Numerical errors of Example \ref{exm:1} calculated by the implicit WSGD scheme
  at $t=1$ for different $\alpha$ with fixed time stepsize $\tau=h^2$ and the parameters $(\lambda_{1},\lambda_{2},\lambda_{3})$
  selected in set $\mathcal{S}_{1}$.}\vspace{5pt}
  \begin{tabular*}{\linewidth}{@{\extracolsep{\fill}}*{2}{r}*{10}{c}}
    \toprule
     & & \multicolumn{2}{c}{$\lambda_{1}=0.75$} & \multicolumn{2}{c}{$\lambda_{1}=1.0$}& \multicolumn{2}{c}{$\lambda_{1}=1.2$}& \multicolumn{2}{c}{$\lambda_{1}=1.5$} \\
    \cline{3-4} \cline{5-6} \cline{7-8}\cline{9-10}\\[-8pt]
    $\alpha$ & $h$ & $\|e\|_{\infty}$ & Rate
    & $\|e\|_{\infty}$ & Rate & $\|e\|_{\infty}$ & Rate& $\|e\|_{\infty}$ & Rate\\
    \toprule
     & 1/10 & 5.63E-04 & -    & 6.96E-04 & -    & 8.42E-04 & -   &1.02E-03&-\\
 1.2 & 1/20 & 1.61E-04 & 1.80 & 2.10E-04 & 1.73 & 2.54E-04 & 1.73&3.17E-04&1.69\\
     & 1/40 & 4.41E-05 & 1.87 & 6.52E-05 & 1.69 & 8.31E-05 & 1.61&1.11E-04&1.52\\
    \toprule
     & 1/10 & 4.67E-04 & -    & 8.43E-04 & -   & 1.38E-03 & -   &2.25E-03&-\\
1.7  & 1/20 & 6.60E-05 & 2.82 & 2.26E-04 & 1.90& 3.85E-04 & 1.84&6.32E-04&1.83\\
     & 1/40 & 1.27E-05 & 2.37 & 5.65E-05 & 2.00& 9.60E-05 & 2.00&1.56E-04&2.02 \\
    \toprule
     & 1/10 & 4.69E-004 & -    & 5.35E-04 & -    & 1.23E-03 & -   &2.43E-03&-\\
1.9  & 1/20 & 9.92E-005 & 2.15 & 1.33E-04 & 2.01 & 3.18E-04 & 1.96&6.05E-04&2.00  \\
     & 1/40 & 2.35E-005 & 2.13 & 3.25E-05 & 2.03 & 7.66E-05 & 2.06&1.42E-04&2.09  \\
    \toprule
  \end{tabular*}\label{tab:1}
  \end{center}
\end{table}
\begin{table}[H]\fontsize{10pt}{12pt}\selectfont
  \begin{center}
  \caption{Numerical errors of Example \ref{exm:1} calculated by the Crank-Nicolson-WSGD
   scheme at $t=1$ for different $\alpha$ with fixed time stepsize $\tau=h$ and the parameters
   $(\lambda_{1},\lambda_{2},\lambda_{3})$ selected in set $\mathcal{S}_{1}$.}\vspace{5pt}
  \begin{tabular*}{\linewidth}{@{\extracolsep{\fill}}*{2}{r}*{10}{c}}
    \toprule
     & & \multicolumn{2}{c}{$\lambda_{1}=0.75$}& \multicolumn{2}{c}{$\lambda_{1}=1.0$} & \multicolumn{2}{c}{$\lambda_{1}=1.2$}& \multicolumn{2}{c}{$\lambda_{1}=1.5$} \\
    \cline{3-4} \cline{5-6}\cline{7-8}\cline{9-10} \\[-8pt]
    $\alpha$ & $h$ & $\|e\|_{\infty}$ & Rate
    & $\|e\|_{\infty}$ & Rate & $\|e\|_{\infty}$ & Rate& $\|e\|_{\infty}$ & Rate\\
    \toprule
     & 1/10 & 5.63E-04 & -    & 7.04E-04 & -    &8.56E-04&-   & 1.04E-03 & -\\
 1.2 & 1/20 & 1.60E-04 & 1.82 & 2.08E-04 & 1.76 &2.51E-04&1.77& 3.15E-04 & 1.72\\
     & 1/40 & 4.34E-05 & 1.88 & 6.46E-05 & 1.68 &8.25E-05&1.61& 1.10E-04 & 1.52\\
    \toprule
     & 1/10 & 7.54E-04 & -     & 1.09E-03 & -   &1.71E-03&-   & 2.70E-03 & - \\
1.7  & 1/20 & 9.81E-05 & 2.94  & 2.42E-04 & 2.17&4.02E-04&2.09& 6.48E-04 & 2.06\\
     & 1/40 & 1.38E-05 & 2.82  & 5.62E-05 & 2.10&9.58E-05&2.07& 1.56E-04 & 2.06 \\
    \toprule
     & 1/10 & 7.46E-004 & -    & 7.89E-04 & -   &1.72E-03&-   & 3.29E-03 & -   \\
1.9  & 1/20 & 1.07E-004 & 2.80 & 1.69E-04 & 2.22&3.85E-04&2.16& 7.20E-04 & 2.19  \\
     & 1/40 & 2.39E-005 & 2.16 & 3.95E-05 & 2.10&8.47E-05&2.19& 1.57E-04 & 2.20  \\
    \toprule
  \end{tabular*}\label{tab:2}
  \end{center}
\end{table}

\begin{figure}[h]
\centering
    \subfigure[$\lambda_{2}=-2$]{
        \label{fig:mini:subfig:a} 
        \begin{minipage}[b]{0.35\textwidth}
            \centering
            \includegraphics[scale=.4]{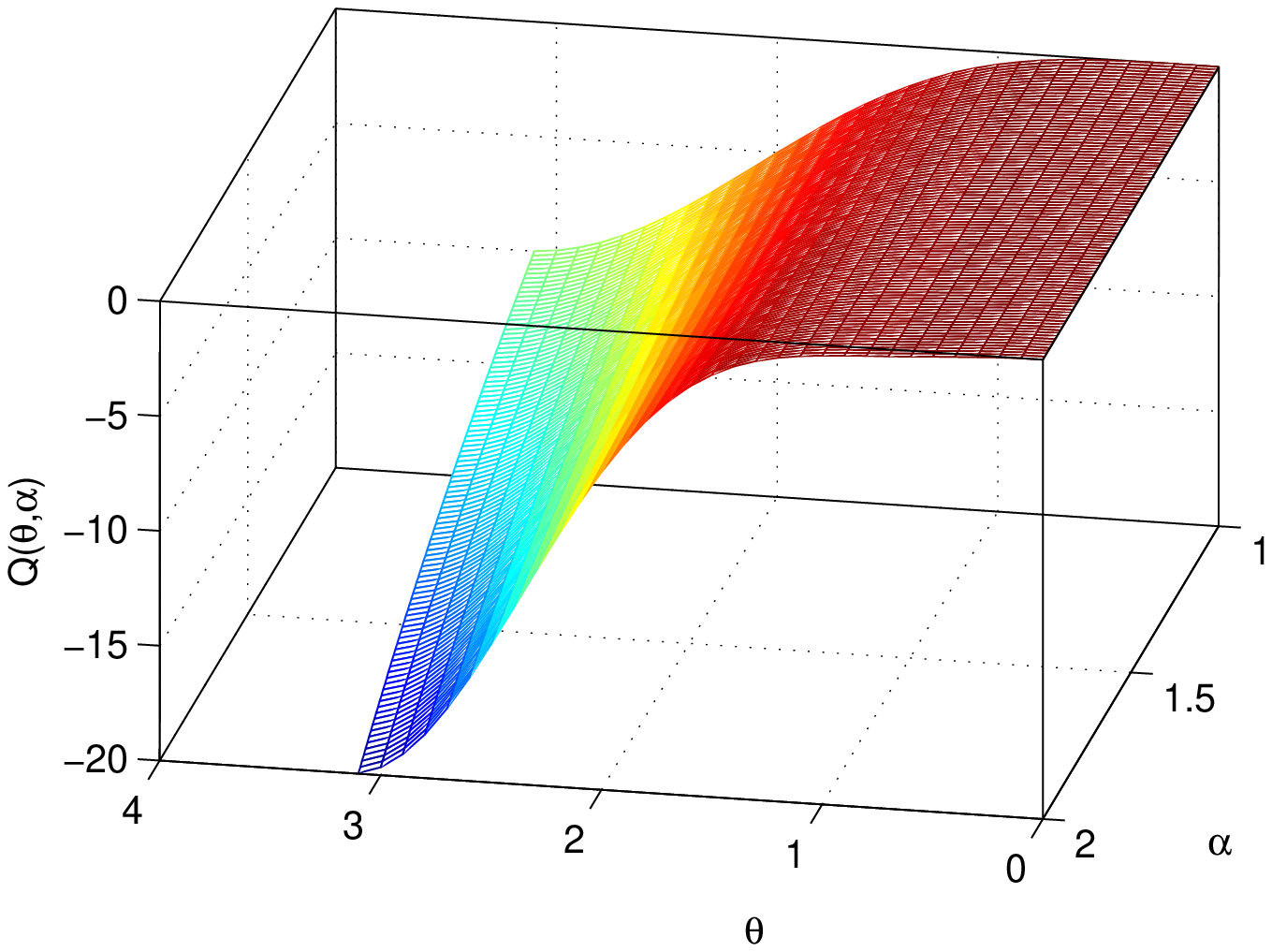}
        \end{minipage}}
\subfigure[$\lambda_{2}=-0.1$]{
        \label{fig:mini:subfig:a} 
        \begin{minipage}[b]{0.35\textwidth}
            \centering
            \includegraphics[scale=.4]{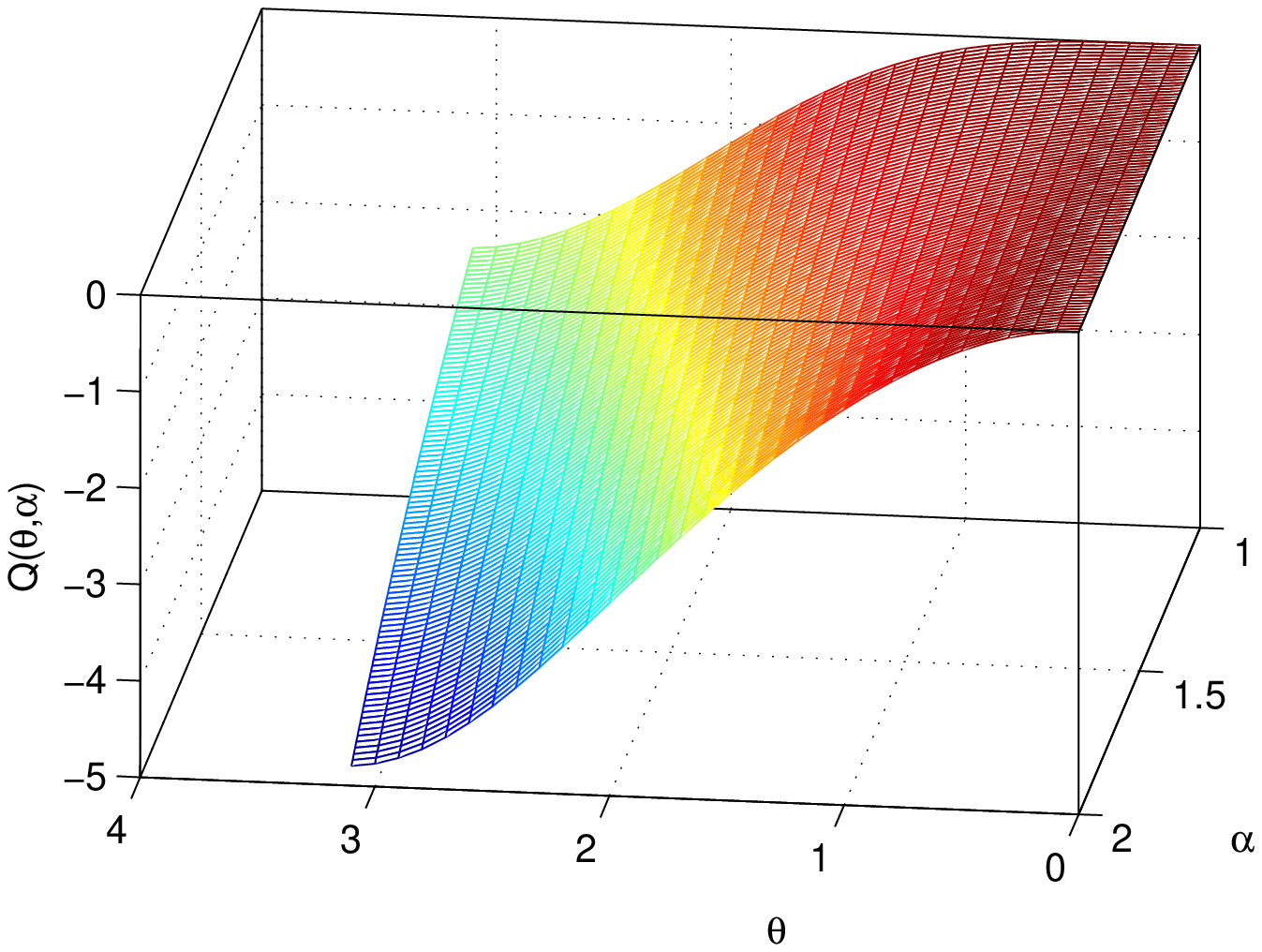}
        \end{minipage}}\\
    \subfigure[$\lambda_{2}=0$]{
        \label{fig:mini:subfig:b} 
        \begin{minipage}[b]{0.35\textwidth}
            \centering
            \includegraphics[scale=.4]{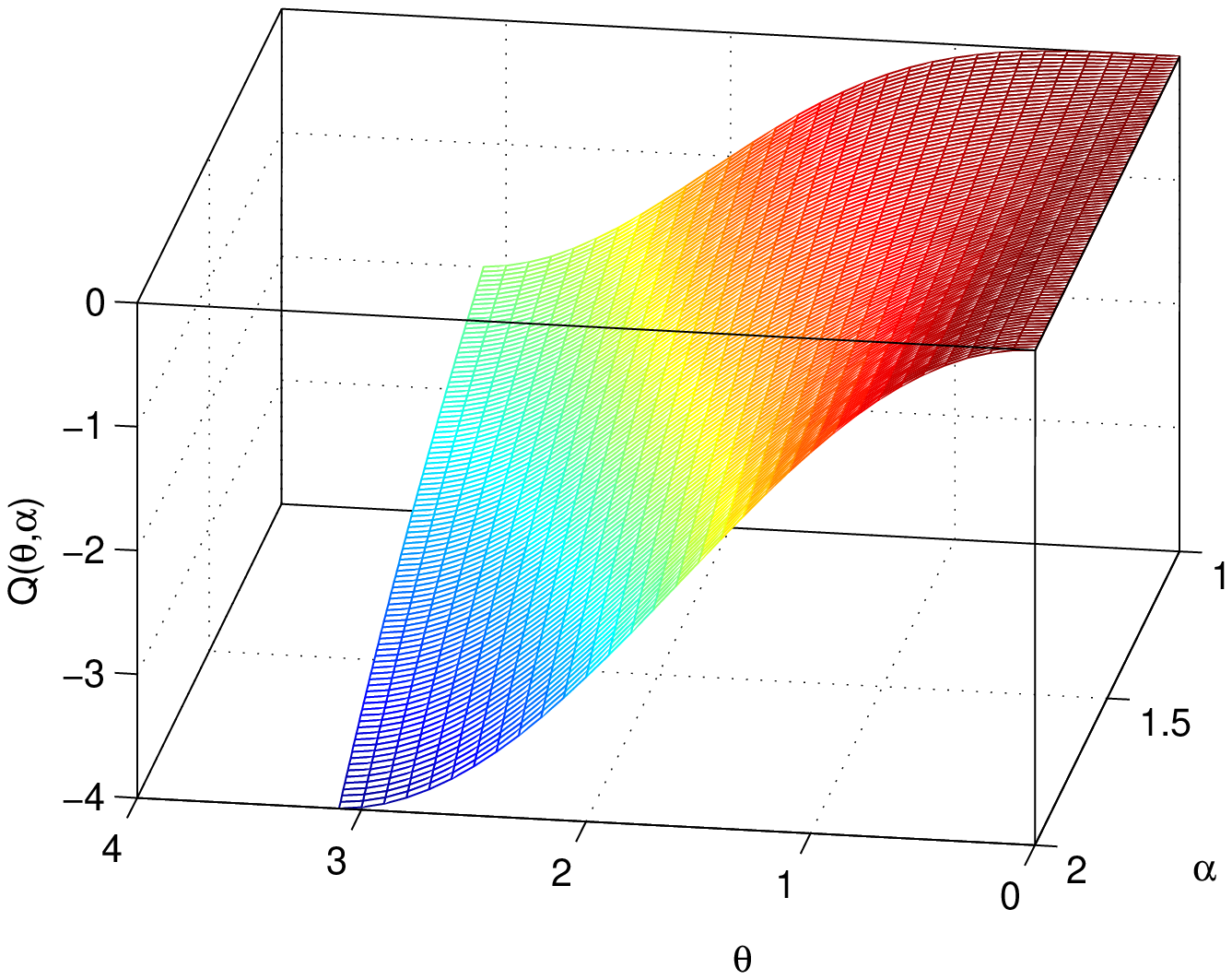}
        \end{minipage}}
    \subfigure[$\lambda_{2}=0.4$]{
        \label{fig:mini:subfig:a} 
        \begin{minipage}[b]{0.35\textwidth}
            \centering
            \includegraphics[scale=.4]{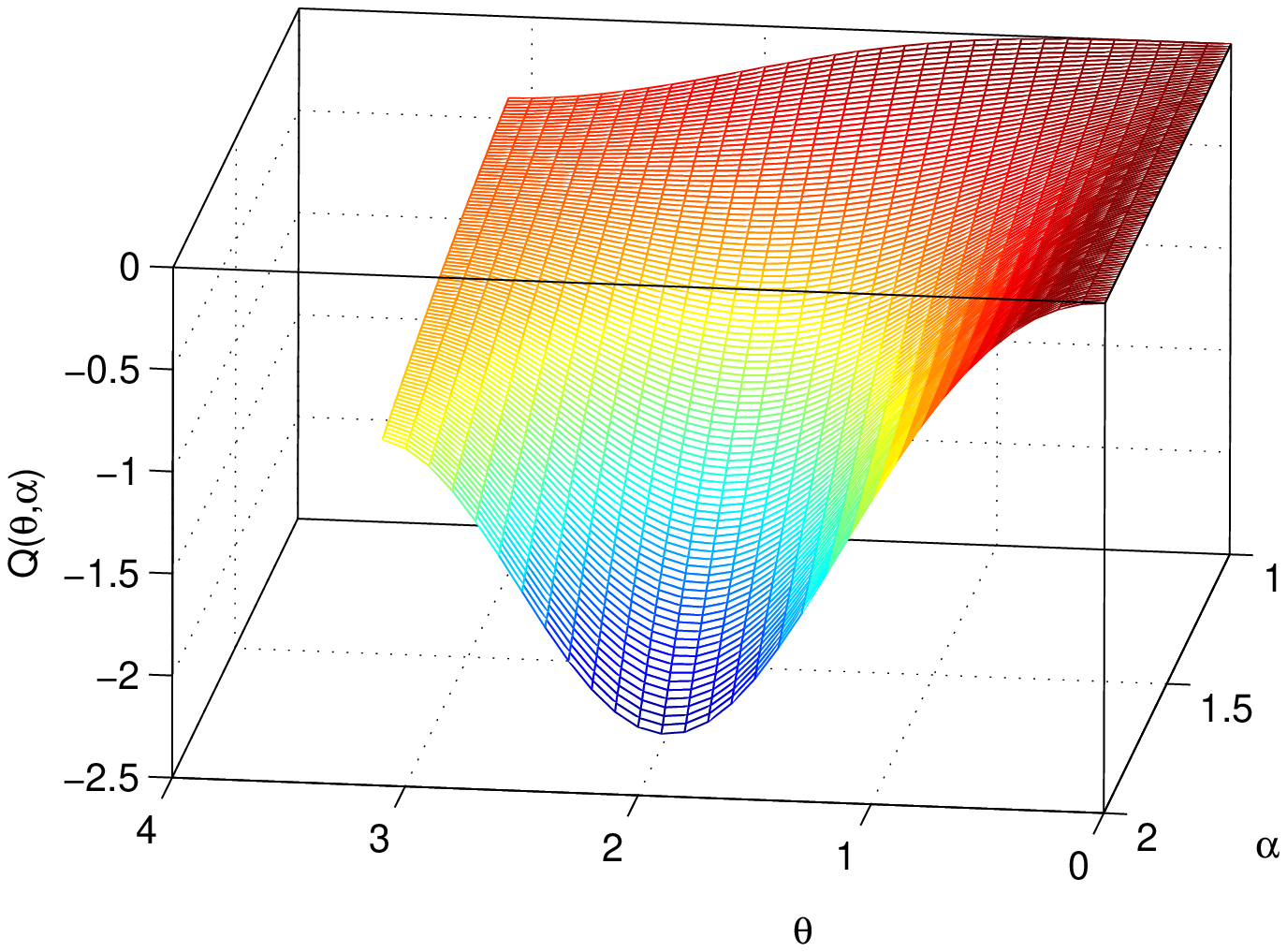}
        \end{minipage}}
    \caption{The trigonometric polynomials $Q(\theta,\alpha)$ in amplification factors $\rho(\theta)$ and the parameters $(\lambda_1,\lambda_2,\lambda_3)$ selected in set
     $\mathcal{S}_{2}=\big\{\frac{2+\alpha}{4}-\frac{\lambda_2}{2},\lambda_2 ~\textrm{is~given},\frac{2-\alpha}{4}-\frac{\lambda_2}{2}\big\}$.}
    \label{fig2} 
\end{figure}
\begin{table}[H]\fontsize{10pt}{12pt}\selectfont
  \begin{center}
  \caption{Numerical errors of Example \ref{exm:1} calculated by the implicit WSGD scheme at $t=1$ for different $\alpha$ with fixed time step $\tau=h^2$ and the parameters $(\lambda_{1},\lambda_{2},\lambda_{3})$ selected in set $\mathcal{S}_{2}$.}\vspace{5pt}
  \begin{tabular*}{\linewidth}{@{\extracolsep{\fill}}*{2}{r}*{10}{c}}
    \toprule
     & & \multicolumn{2}{c}{$\lambda_{2}=-2$} & \multicolumn{2}{c}{$\lambda_{2}=-0.1$}& \multicolumn{2}{c}{$\lambda_{2}=0$}& \multicolumn{2}{c}{$\lambda_{2}=0.4$} \\
    \cline{3-4} \cline{5-6} \cline{7-8}\cline{9-10}\\[-8pt]
    $\alpha$ & $h$ & $\|e\|_{\infty}$ & Rate
    & $\|e\|_{\infty}$ & Rate & $\|e\|_{\infty}$ & Rate& $\|e\|_{\infty}$ & Rate\\
    \toprule
     & 1/10 & 1.17E-03 & -    & 6.20E-04 & -& 5.93E-04 & -      &5.41E-04&-\\
 1.2 & 1/20 & 3.79E-04 & 1.62 & 1.81E-04 & 1.78& 1.71E-04 & 1.79 &1.31E-04&2.05\\
     & 1/40 & 1.39E-04 & 1.45 & 5.24E-05 & 1.79& 4.82E-05 & 1.83 &3.28E-05&2.00\\
    \toprule
     & 1/10 & 3.62E-03 & -    & 7.77E-04 & -& 6.46E-04 & -      &5.11E-04&-\\
1.7  & 1/20 & 9.99E-04 & 1.86 & 2.06E-04 & 1.91& 1.67E-04 & 1.95&6.34E-05&3.01\\
     & 1/40 & 2.41E-04 & 2.05 & 5.16E-05 & 2.00& 4.17E-05 & 2.00&9.73E-06&2.70 \\
    \toprule
     & 1/10 & 4.78E-003 & -    &6.20E-04 & -& 4.52E-04 & - &    4.07E-04&-\\
1.9  & 1/20 & 1.07E-004 & 2.15 & 1.56E-04 & 1.99& 1.10E-04 & 2.04 &7.59E-05&2.42  \\
     & 1/40 & 2.45E-004 & 2.13 & 3.80E-05 & 2.04& 2.69E-05 & 2.03 &1.81E-05&2.07  \\
    \toprule
  \end{tabular*}\label{tab:3}
  \end{center}
\end{table}
\begin{table}[H]\fontsize{10pt}{12pt}\selectfont
  \begin{center}
  \caption{Numerical errors of Example \ref{exm:1} calculated by the Crank-Nicolson-WSGD scheme at $t=1$ for different $\alpha$ with fixed time stepsize $\tau=h$ and the parameters $(\lambda_{1},\lambda_{2},\lambda_{3})$ selected in set $\mathcal{S}_{2}$.}\vspace{5pt}
  \begin{tabular*}{\linewidth}{@{\extracolsep{\fill}}*{2}{r}*{10}{c}}
    \toprule
     & & \multicolumn{2}{c}{$\lambda_{2}=-2$}& \multicolumn{2}{c}{$\lambda_{2}=-0.1$} & \multicolumn{2}{c}{$\lambda_{2}=0$}& \multicolumn{2}{c}{$\lambda_{2}=0.4$} \\
    \cline{3-4} \cline{5-6}\cline{7-8}\cline{9-10} \\[-8pt]
    $\alpha$ & $h$ & $\|e\|_{\infty}$ & Rate
    & $\|e\|_{\infty}$ & Rate & $\|e\|_{\infty}$ & Rate& $\|e\|_{\infty}$ & Rate\\
    \toprule
     & 1/10 & 1.20E-03 & -    & 6.21E-04 & -   & 5.95E-04 & -    &5.52E-04&-\\
 1.2 & 1/20 & 3.77E-04 & 1.67 & 1.79E-04 & 1.80& 1.70E-04 & 1.81 &1.31E-04&2.07\\
     & 1/40 & 1.38E-04 & 1.45 & 5.17E-05 & 1.79& 4.75E-05 & 1.83 &3.33E-05&1.98\\
    \toprule
     & 1/10 & 4.20E-03 & -     & 1.01E-03 & -   & 8.58E-04 & -   &8.09E-04&-\\
1.7  & 1/20 & 1.01E-03 & 2.06  & 2.22E-04 & 2.18& 1.83E-04 & 2.23&9.92E-05&3.03\\
     & 1/40 & 2.41E-04 & 2.07  & 5.13E-05 & 2.12& 4.14E-05 & 2.14&1.17E-06&3.10 \\
    \toprule
     & 1/10 & 6.33E-003 & -    & 9.01E-04 & -   & 6.78E-04 & -   &6.73E-04&-\\
1.9  & 1/20 & 1.27E-003 & 2.31 & 1.96E-04 & 2.20& 1.47E-04 & 2.20&8.17E-05&3.04  \\
     & 1/40 & 2.84E-004 & 2.16 & 4.51E-05 & 2.11& 3.39E-05 & 2.11&1.84E-05&2.15  \\
    \toprule
  \end{tabular*}\label{tab:4}
  \end{center}
\end{table}
\begin{figure}[h]
\centering
    \subfigure[$\lambda_{3}=-0.001$]{
        \label{fig:mini:subfig:a} 
        \begin{minipage}[b]{0.32\textwidth}
            \centering
            \includegraphics[scale=.4]{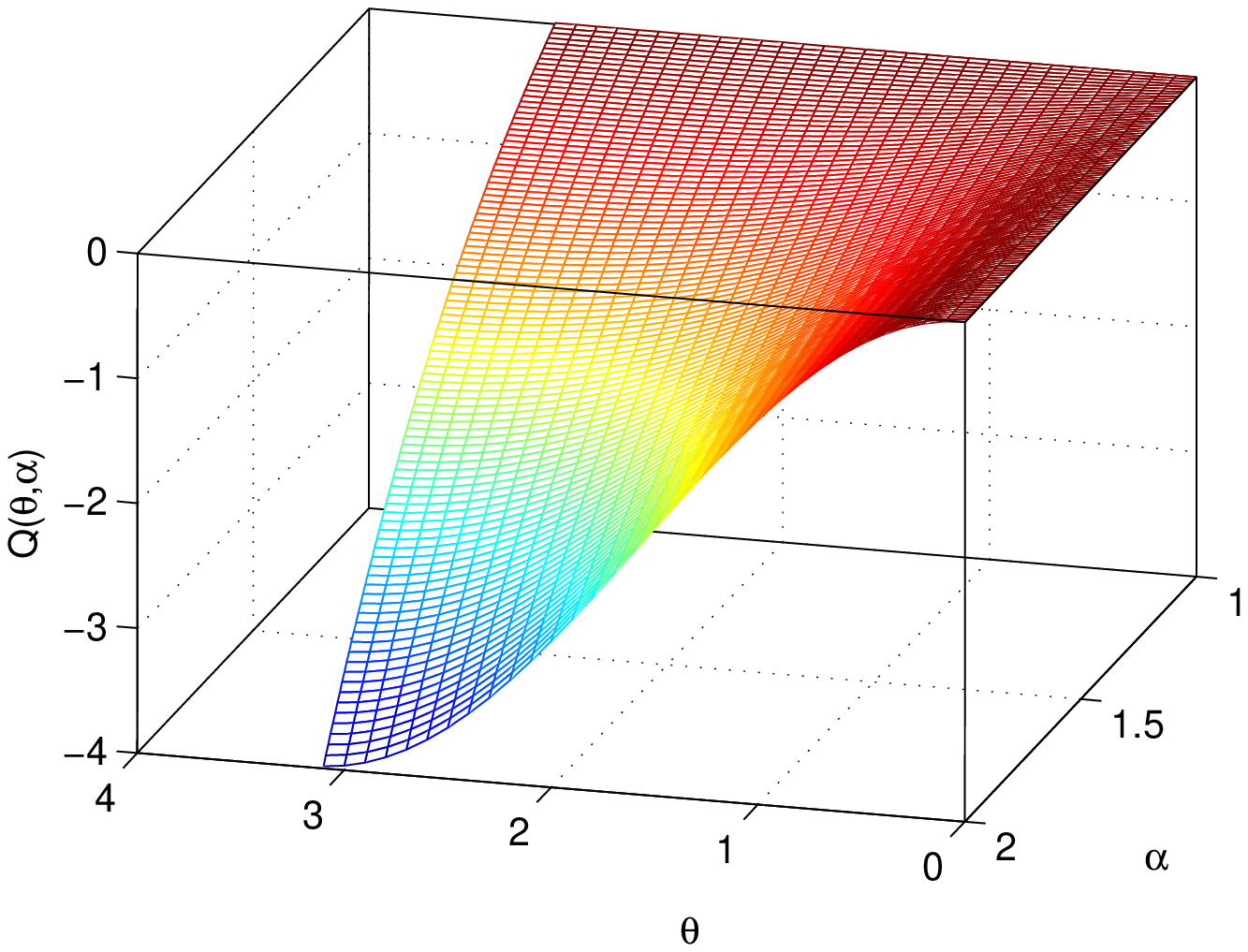}
        \end{minipage}}
    \subfigure[$\lambda_{3}=0$]{
        \label{fig:mini:subfig:a} 
        \begin{minipage}[b]{0.32\textwidth}
            \centering
            \includegraphics[scale=.4]{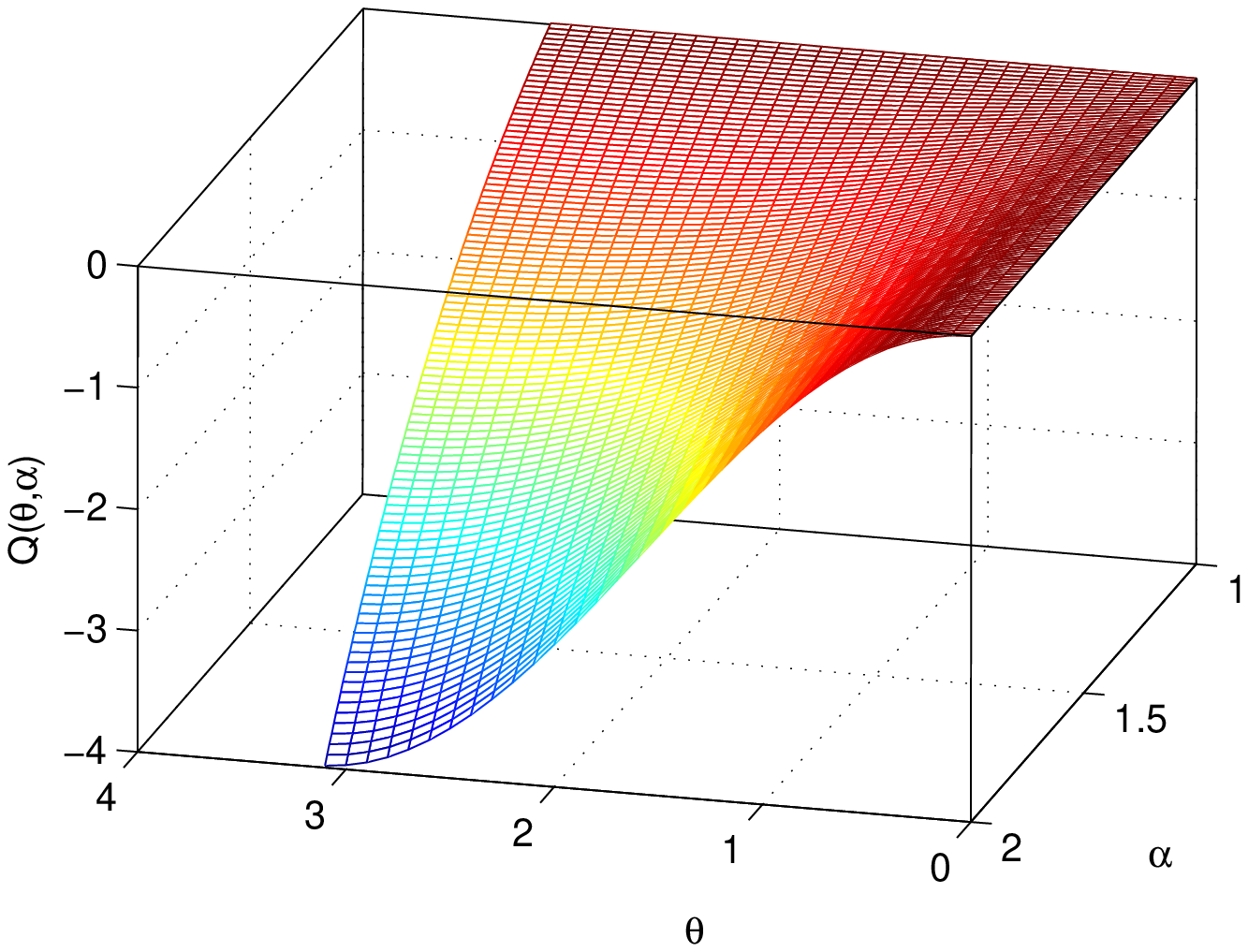}
        \end{minipage}}\\
    \subfigure[$\lambda_{3}=0.001$]{
        \label{fig:mini:subfig:b} 
        \begin{minipage}[b]{0.35\textwidth}
            \centering
            \includegraphics[scale=.4]{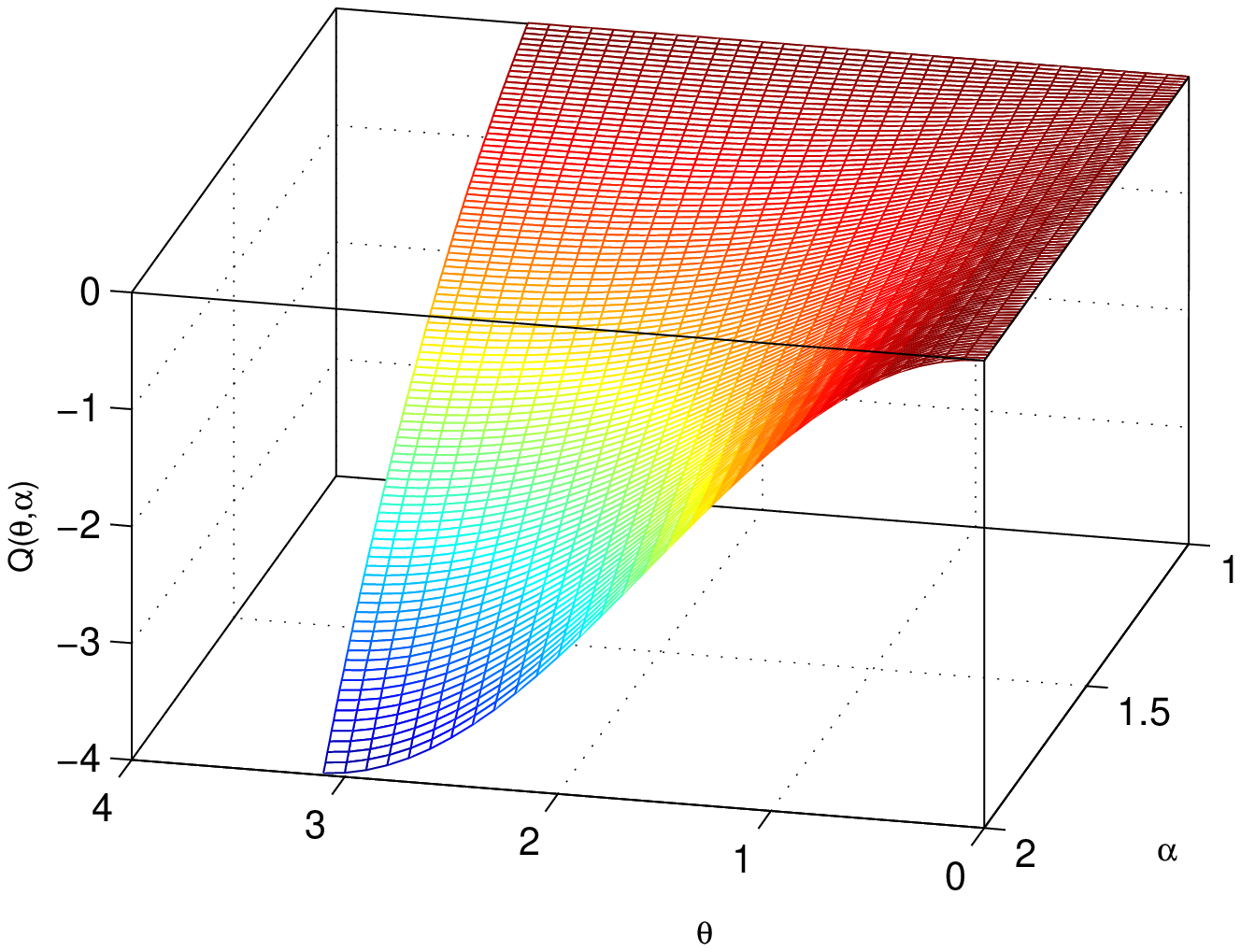}
        \end{minipage}}
    \subfigure[$\lambda_{3}=0.01$]{
        \label{fig:mini:subfig:a} 
        \begin{minipage}[b]{0.35\textwidth}
            \centering
            \includegraphics[scale=.4]{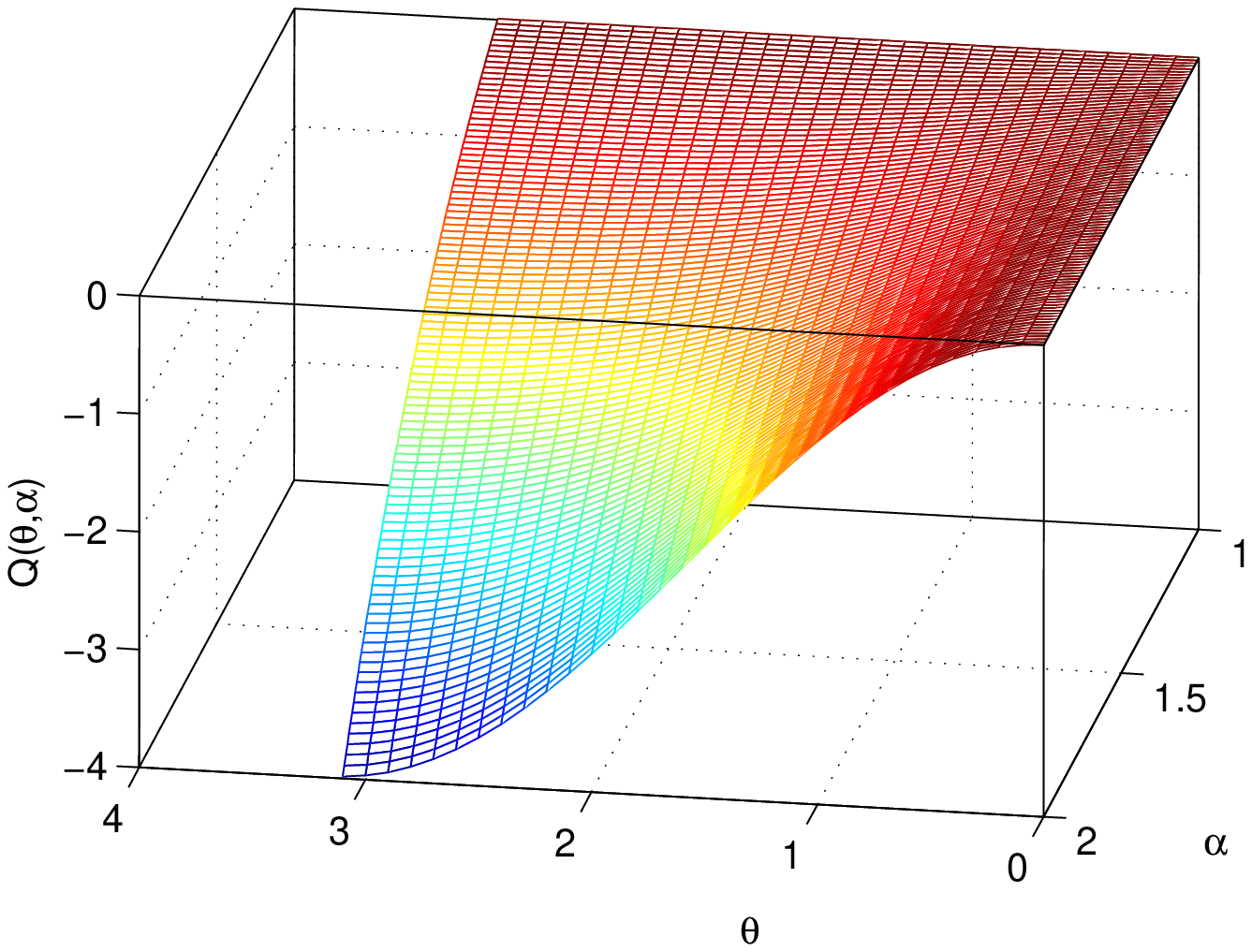}
        \end{minipage}}
    \caption{The trigonometric polynomials $Q(\theta,\alpha)$ in amplification factors $\rho(\theta)$ and the parameters $(\lambda_1,\lambda_2,\lambda_3)$ selected in set
     $\mathcal{S}_{3}=\big\{\frac{\alpha}{2}+\lambda_3,\frac{2-\alpha}{2}-2\lambda_3,\lambda_3 ~\textrm{is~given}\big\}$.}
    \label{fig3} 
\end{figure}
\begin{table}[H]\fontsize{10pt}{12pt}\selectfont
  \begin{center}
  \caption{Numerical errors of Example \ref{exm:1} calculated by the  WSGD scheme at $t=1$ for different $\alpha$ with fixed time stepsize $\tau=h^2$
  and the parameters $(\lambda_{1},\lambda_{2},\lambda_{3})$ selected in set $\mathcal{S}_{3}$.}\vspace{5pt}
  \begin{tabular*}{\linewidth}{@{\extracolsep{\fill}}*{2}{r}*{12}{c}}
    \toprule
     & & \multicolumn{2}{c}{$\lambda_{3}=-0.001$} & \multicolumn{2}{c}{$\lambda_{3}=0$}& \multicolumn{2}{c}{$\lambda_{3}=0.001$}& \multicolumn{2}{c}{$\lambda_{3}=0.01$} \\
    \cline{3-4} \cline{5-6}\cline{7-8} \cline{9-10}\\[-8pt]
    $\alpha$ & $h$ & $\|e\|_{\infty}$ & Rate
    & $\|e\|_{\infty}$ & Rate & $\|e\|_{\infty}$ & Rate& $\|e\|_{\infty}$ & Rate\\
    \toprule
     & 1/10 & 5.93E-04 & -   & 5.93E-04 & -     & 5.93E-04 & -   &5.90E-04&-\\
 1.2 & 1/20 & 1.71E-04 & 1.79& 1.71E-04 & 1.79  & 1.71E-04 & 1.79&1.70E-04&1.79\\
     & 1/40 & 4.83E-05 & 1.83& 4.82E-05 & 1.83  & 4.82E-05 & 1.83&4.78E-05&1.83\\
    \toprule
     & 1/10 & 6.47E-04 & -   & 6.46E-04 & -     & 6.44E-04 & -   &6.33E-04&-\\
1.7  & 1/20 & 1.67E-04 & 1.95& 1.67E-04 & 1.95  & 1.67E-04 & 1.95&1.63E-04&1.96\\
     & 1/40 & 4.18E-05 & 2.00& 4.17E-05 & 2.00  & 4.16E-05 & 2.00&4.07E-05&2.00 \\
    \toprule
     & 1/10 & 4.54E-004 & -   & 4.52E-004 & -   & 4.50E-04 & -   &4.35E-04&-\\
1.9  & 1/20 & 1.10E-004 & 2.04& 1.10E-004 & 2.04& 1.10E-04 & 2.04&1.05E-04&2.05  \\
     & 1/40 & 2.71E-005 & 2.03& 2.70E-005 & 2.03& 2.68E-05 & 2.03&2.59E-05&2.03  \\
    \toprule
  \end{tabular*}\label{tab:5}
  \end{center}
\end{table}

\begin{table}[H]\fontsize{10pt}{12pt}\selectfont
  \begin{center}
  \caption{Numerical errors of Example \ref{exm:1} calculated by the Crank-Nicolson-WSGD scheme
  at $t=1$ for different $\alpha$ with fixed time stepsize $\tau=h$ and the parameters $(\lambda_{1},\lambda_{2},\lambda_{3})$
   selected in set $\mathcal{S}_{3}$.}\vspace{5pt}
  \begin{tabular*}{\linewidth}{@{\extracolsep{\fill}}*{2}{r}*{12}{c}}
   \toprule
     & & \multicolumn{2}{c}{$\lambda_{3}=-0.001$} & \multicolumn{2}{c}{$\lambda_{3}=0$}& \multicolumn{2}{c}{$\lambda_{3}=0.001$}& \multicolumn{2}{c}{$\lambda_{3}=0.01$} \\
    \cline{3-4} \cline{5-6}\cline{7-8}\cline{9-10}\\[-8pt]
    $\alpha$ & $h$ & $\|e\|_{\infty}$ & Rate
    & $\|e\|_{\infty}$ & Rate & $\|e\|_{\infty}$ & Rate\\
    \toprule
     & 1/10 & 5.95E-04 & -     & 5.95E-04 & -   & 5.95E-04 & -   &5.92E-04&-\\
 1.2 & 1/20 & 1.70E-04 & 1.81  & 1.69E-04 & 1.81& 1.69E-04 & 1.81&1.68E-04&1.81\\
     & 1/40 & 4.76E-05 & 1.83  & 4.75E-05 & 1.83& 4.75E-05 & 1.83&4.71E-05&1.84\\
    \toprule
     & 1/10 & 8.60E-04 & -     & 8.59E-04 & -   & 8.57E-04 & -   &8.43E-04&-\\
1.7  & 1/20 & 1.84E-04 & 2.23  & 1.83E-04 & 2.23& 1.83E-04 & 2.23&1.79E-04&2.23\\
     & 1/40 & 4.15E-05 & 2.14  & 4.14E-05 & 2.14& 4.13E-05 & 2.14&4.05E-05&2.14 \\
    \toprule
     & 1/10 & 6.80E-004 & -     & 6.78E-04 & -   & 6.76E-04 & -   &6.56E-04&-\\
1.9  & 1/20 & 1.47E-004 & 2.21  & 1.47E-04 & 2.21& 1.47E-04 & 2.20&1.44E-04&2.19  \\
     & 1/40 & 3.41E-005 & 2.11  & 3.39E-05 & 2.11& 3.38E-05 & 2.12&3.28E-05&2.13 \\
    \toprule
  \end{tabular*}\label{tab:6}
  \end{center}
\end{table}
From Figure \ref{fig2}, we note that with the chosen parameters the absolute values of the trigonometric polynomials $Q$
are almost the same;  it is interesting that the numerical results presented in Tables \ref{tab:5}-\ref{tab:6} are also almost the same.
\begin{table}[H]\fontsize{10pt}{12pt}\selectfont
  \begin{center}
  \caption{The absolute value of trigonometric polynomials $Q(\theta,\alpha)$ with $\theta\in[0,\pi],\alpha\in[1,2]$ and the parameters $(\lambda_{1},\lambda_{2},\lambda_{3})$
   selected in different sets.}\vspace{5pt}
  \begin{tabular*}{\linewidth}{@{\extracolsep{\fill}}*{2}{r}*{12}{c}}
   \toprule
    Sets &  $(\lambda_{1},\lambda_{2},\lambda_{3})$ & $\max_{\theta\in[0,\pi],\alpha\in[1,2]}|Q|$\\
    \toprule
                   & $\lambda_{1}=0.75$ & 2.1122    \\
 $\mathcal{S}_{1}$ & $\lambda_{1}=1.00$ & 4.2422    \\
                   & $\lambda_{1}=1.20$ & 7.1927    \\
                   & $\lambda_{1}=1.50$ & 11.984    \\
    \toprule
                   & $\lambda_{2}=-2.00$ & 19.970    \\
 $\mathcal{S}_{2}$ & $\lambda_{2}=-0.10$ & 4.7969    \\
                   & $\lambda_{2}=0$     & 3.9983    \\
                   & $\lambda_{2}=0.40$  & 2.1937    \\
    \toprule
                   & $\lambda_{3}=-0.001$& 3.9823    \\
 $\mathcal{S}_{3}$ & $\lambda_{3}=0$     & 3.9983    \\
                   & $\lambda_{3}=0.001$ & 4.0142    \\
                   & $\lambda_{3}=0.01$  & 4.1580    \\
    \toprule
  \end{tabular*}\label{tab:60}
  \end{center}
\end{table}
 The values of the trigonometric polynomials $Q(\theta,\alpha)$ in amplification factors $\rho(\theta)$ are shown in
 Figures \ref{fig1}-\ref{fig3}. It is interesting to note that with the decrease of the absolute value of the trigonometric polynomials $Q$ in amplification factors $\rho(\theta)$, the numerical errors are increasing. The reason should be that the numerical error dissipation is weakening with the increase of the value of $Q$.
Hence, in practical computation, we can check the amplification factors $\rho(\theta)$ to get stable and high accurate numerical scheme.
And this results are consist with the classical results \cite{Gustafsson:95,Strikwerda:04}.

\begin{example}\label{exm:2}
The following two dimensional fractional advection diffusion equation
  \begin{equation*}
      u_{t}(x,y,t)+ \partial_{x}u(x,y,t)+ \partial_{y}u(x,y,t)=({_0}D_x^{\alpha}+{_x}D_1^{\alpha})u(x,y,t)
        +({_0}D_y^{\beta}+{_y}D_1^{\beta})u(x,y,t)+f(x,y,t)
  \end{equation*}
  is considered in the domain $\Omega=(0,1)^2$; and with the boundary conditions $u(x,y,t)|_{\partial\Omega}=0$ and the initial condition $u(x,y,0)=x^3(1-x)^3y^3(1-y)^3$; where the source term
\begin{equation*}
     \begin{split}
     f(x,y,t)=-\mathrm{e}^{-t}\Big[& x^3(1-x)^3y^3(1-y)^3-3x^2(1-x)^3y^3(1-y)^3+3x^3(1-x)^2y^3(1-y)^3\\
             &-3x^3(1-x)^3y^2(1-y)^3+3x^3(1-x)^3y^3(1-y)^2\\
             &+\Big(\frac{\Gamma(4)}{\Gamma(4-\alpha)}\big(x^{3-\alpha}+(1-x)^{3-\alpha}\big)
              -\frac{3\Gamma(5)}{\Gamma(5-\alpha)}\big(x^{4-\alpha}+(1-x)^{4-\alpha}\big)\\
             &+\frac{3\Gamma(6)}{\Gamma(6-\alpha)}\big(x^{5-\alpha}+(1-x)^{5-\alpha}\big)
              -\frac{\Gamma(7)}{\Gamma(7-\alpha)}\big(x^{6-\alpha}+(1-x)^{6-\alpha}\big)\Big)y^3(1-y)^3\\
             &+\Big(\frac{\Gamma(4)}{\Gamma(4-\beta)}\big(y^{3-\beta}+(1-y)^{3-\beta}\big)
              -\frac{3\Gamma(5)}{\Gamma(5-\beta)}\big(y^{4-\beta}+(1-y)^{4-\beta}\big)\\
             &+\frac{3\Gamma(6)}{\Gamma(6-\beta)}\big(y^{5-\beta}+(1-y)^{5-\beta}\big)
              -\frac{\Gamma(7)}{\Gamma(7-\beta)}\big(y^{6-\beta}+(1-y)^{6-\beta}\big)\Big)x^3(1-x)^3\Big].
    \end{split}
  \end{equation*}
  The exact solution is given by $u(x,y,t)=\mathrm{e}^{-t}x^3(1-x)^3y^3(1-y)^3$.
\end{example}
In this example we use three numerical schemes:  PR-ADI \eqref{eq:4.16}, Douglas-ADI
\eqref{eq:4.17} and D'yakonov-ADI \eqref{eq:4.18};  the
maximum  errors and their convergence rates to Example \ref{exm:2} approximated
at $t=1$ are listed in Tables \ref{tab:7}-\ref{tab:10}, where $h_x=h_y$; and $(\lambda_{1},\lambda_{2},\lambda_{3})$ and
$(\gamma_{1},\gamma_{2},\gamma_{3})$ denote the weighted parameters of the WSGD operators in $x$ and $y$ directions, respectively.

\begin{table}[H]\fontsize{10pt}{12pt}\selectfont
  \begin{center}
  \caption{The maximum errors and convergent rates to Example \ref{exm:2} approximated at $t=1$
  with $\tau=h_x=h_y$, $\alpha=1.5,\beta=1.8$ and the parameters $(\lambda_{1},\lambda_{2},\lambda_{3})$
   and $(\gamma_{1},\gamma_{2},\gamma_{3})$ selected in set $\mathcal{S}_{1}$.}\vspace{5pt}
  \begin{tabular*}{\linewidth}{@{\extracolsep{\fill}}*{2}{r}*{10}{c}}
    \toprule
     & & \multicolumn{2}{c}{$\lambda_{1}=\gamma_{1}=0.8$} & \multicolumn{2}{c}{$\lambda_{1}=\gamma_{1}=1.0$}& \multicolumn{2}{c}{$\lambda_{1}=\gamma_{1}=1.2$} \\
    \cline{3-4} \cline{5-6}\cline{7-8} \\[-8pt]
    Scheme & $h_x=h_y$ & $\|e\|_{\infty}$ & Rate
    & $\|e\|_{\infty}$ & Rate & $\|e\|_{\infty}$ & Rate\\
    \toprule
             & 1/10 & 4.65E-06 & -     & 8.45E-06 & -    & 1.24E-05 & - \\
    Peaceman-& 1/20 & 1.10E-06 & 2.07  & 2.33E-07 & 1.86 & 3.59E-06 & 1.79  \\
    Rachford-& 1/40 & 2.81E-07 & 1.97  & 5.98E-07 & 1.96 & 9.26E-07 & 1.95   \\
    ADI      & 1/80 & 7.08E-08 & 1.99  & 1.51E-07 & 1.99 & 2.34E-07 & 1.98   \\
    \midrule
              & 1/10 & 3.40E-06 & -     & 7.17E-06 & -    & 1.11E-05 & -  \\
    Douglas-  & 1/20 & 8.05E-07 & 2.08  & 2.02E-07 & 1.83 & 3.27E-06 & 1.76   \\
    ADI       & 1/40 & 2.03E-07 & 1.99  & 5.22E-07 & 1.95 & 8.55E-07 & 1.94   \\
              & 1/80 & 5.17E-08 & 1.98  & 1.33E-07 & 1.99 & 2.16E-07 & 1.98   \\
    \midrule
              & 1/10 & 3.40E-06 & -     & 7.17E-06 & -    & 1.11E-05 & -  \\
    D'yakonov-& 1/20 & 8.05E-07 & 2.08  & 2.02E-07 & 1.83 & 3.27E-06 & 1.76   \\
    ADI       & 1/40 & 2.03E-07 & 1.99  & 5.22E-07 & 1.95 & 8.55E-07 & 1.94   \\
              & 1/80 & 5.17E-08 & 1.98  & 1.33E-07 & 1.99 & 2.16E-07 & 1.98   \\
    \toprule
  \end{tabular*}\label{tab:7}
  \end{center}
\end{table}
\begin{table}[H]\fontsize{10pt}{12pt}\selectfont
  \begin{center}
  \caption{The maximum errors and convergent rates to Example \ref{exm:2} approximated at $t=1$ with $\tau=h_x=h_y$, $\alpha=1.5,\beta=1.8$ and the parameters $(\lambda_{1},\lambda_{2},\lambda_{3})$ and $(\gamma_{1},\gamma_{2},\gamma_{3})$ selected in set $\mathcal{S}_{2}$.}\vspace{5pt}
  \begin{tabular*}{\linewidth}{@{\extracolsep{\fill}}*{2}{r}*{10}{c}}
    \toprule
     & & \multicolumn{2}{c}{$\lambda_{2}=\gamma_{2}=-0.3$} & \multicolumn{2}{c}{$\lambda_{2}=\gamma_{2}=0$}& \multicolumn{2}{c}{$\lambda_{2}=\gamma_{2}=0.3$} \\
    \cline{3-4} \cline{5-6}\cline{7-8} \\[-8pt]
    Scheme & $h_x=h_y$ & $\|e\|_{\infty}$ & Rate
    & $\|e\|_{\infty}$ & Rate & $\|e\|_{\infty}$ & Rate\\
    \toprule
             & 1/10 & 1.00E-05 & -     & 7.13E-06 & -    & 4.28E-06 & - \\
    Peaceman-& 1/20 & 2.80E-06 & 1.84  & 1.87E-06 & 1.93 & 9.82E-07 & 2.13  \\
    Rachford-& 1/40 & 7.17E-07 & 1.97  & 4.75E-07 & 1.98 & 2.42E-07 & 2.02   \\
    ADI      & 1/80 & 1.81E-08 & 1.99  & 1.19E-07 & 1.99 & 6.08E-08 & 1.99   \\
    \midrule
            & 1/10 & 8.75E-06 & -     & 5.86E-06 & -    & 3.03E-06 & - \\
    Douglas-& 1/20 & 2.49E-06 & 1.81  & 1.56E-07 & 1.91 & 6.73E-07 & 2.17   \\
    ADI     & 1/40 & 6.43E-07 & 1.96  & 3.99E-07 & 1.97 & 1.65E-07 & 2.02   \\
            & 1/80 & 1.63E-07 & 1.98  & 1.00E-07 & 1.99 & 4.15E-07 & 1.99   \\
    \midrule
              & 1/10 & 8.75E-06 & -     & 5.86E-06 & -    & 3.03E-06 & -  \\
    D'yakonov-& 1/20 & 2.49E-06 & 1.81  & 1.56E-07 & 1.90 & 6.73E-07 & 2.17   \\
    ADI       & 1/40 & 6.43E-07 & 1.96  & 3.99E-07 & 1.97 & 1.65E-07 & 2.02   \\
              & 1/80 & 1.63E-07 & 1.98  & 1.00E-07 & 1.99 & 4.15E-08 & 1.99   \\
    \toprule
  \end{tabular*}\label{tab:8}
  \end{center}
\end{table}
\begin{table}[H]\fontsize{10pt}{12pt}\selectfont
  \begin{center}
  \caption{The maximum errors and convergent rates to Example \ref{exm:2} approximated at $t=1$
   with $\tau=h_x=h_y$, $\alpha=1.5,\beta=1.8$ and the parameters $(\lambda_{1},\lambda_{2},\lambda_{3})$
    and $(\gamma_{1},\gamma_{2},\gamma_{3})$ selected in set $\mathcal{S}_{3}$.}\vspace{5pt}
  \begin{tabular*}{\linewidth}{@{\extracolsep{\fill}}*{2}{r}*{10}{c}}
    \toprule
     & & \multicolumn{2}{c}{$\lambda_{3}=\gamma_{3}=-0.002$} & \multicolumn{2}{c}{$\lambda_{3}=\gamma_{3}=0$}& \multicolumn{2}{c}{$\lambda_{3}=\gamma_{3}=0.002$} \\
    \cline{3-4} \cline{5-6}\cline{7-8} \\[-8pt]
    Scheme & $h_{x}=h_{y}$ & $\|e\|_{\infty}$ & Rate
    & $\|e\|_{\infty}$ & Rate & $\|e\|_{\infty}$ & Rate\\
    \toprule
             & 1/10 & 5.84E-06 & -     & 5.82E-06 & -    & 5.86E-06 & - \\
    Peaceman-& 1/20 & 1.42E-06 & 2.04  & 1.41E-06 & 2.04 & 1.43E-06 & 2.04  \\
    Rachford-& 1/40 & 3.55E-07 & 2.00  & 3.54E-07 & 2.00 & 3.57E-07 & 2.00   \\
    ADI      & 1/80 & 8.87E-08 & 2.00  & 8.85E-08 & 2.00 & 8.92E-08 & 2.00   \\
    \midrule
            & 1/10 & 4.52E-06 & -     & 4.56E-06 & -    & 4.59E-06 & - \\
    Douglas-& 1/20 & 1.10E-06 & 2.04  & 1.11E-06 & 2.04 & 1.12E-06 & 2.03   \\
    ADI     & 1/40 & 2.73E-07 & 2.00  & 2.76E-07 & 2.00 & 2.79E-07 & 2.00   \\
            & 1/80 & 6.84E-08 & 2.00  & 6.91E-08 & 2.00 & 6.99E-08 & 2.00   \\
    \midrule
              & 1/10 & 4.52E-06 & -     & 4.52E-06 & -    & 4.59E-06 & -  \\
    D'yakonov-& 1/20 & 1.09E-06 & 2.04  & 1.11E-06 & 2.03 & 1.12E-06 & 2.03   \\
    ADI       & 1/40 & 2.73E-07 & 2.00  & 2.76E-07 & 2.00 & 2.79E-07 & 2.00   \\
              & 1/80 & 6.84E-08 & 2.00  & 6.91E-08 & 2.00 & 6.99E-07 & 2.00   \\
    \toprule
  \end{tabular*}\label{tab:9}
  \end{center}
\end{table}

\begin{table}[H]\fontsize{10pt}{12pt}\selectfont
  \begin{center}
  \caption{The maximum errors and convergent rates to Example \ref{exm:2}
  approximated at $t=1$ with $\tau=h_x=h_y$, $\alpha=1.5,\beta=1.8$ and the parameters
  $(\lambda_{1},\lambda_{2},\lambda_{3})$ selected in set $\mathcal{S}_{3}$ and
  $(\gamma_{1},\gamma_{2},\gamma_{3})$ selected in set $\mathcal{S}_{2}$.}\vspace{5pt}
  \begin{tabular*}{\linewidth}{@{\extracolsep{\fill}}*{2}{r}*{8}{c}}
    \toprule
     & & \multicolumn{2}{c}{$\lambda_{3}=0.02,~\gamma_{2}=0.4$} & \multicolumn{2}{c}{$\lambda_{3}=0.02,~\gamma_{2}=0.5$} \\
    \cline{3-4} \cline{5-6} \\[-8pt]
    Scheme & $h_x=h_y$ & $\|e\|_{\infty}$ & Rate
    & $\|e\|_{\infty}$ & Rate \\
    \toprule
             & 1/10 & 3.72E-06 & -     & 3.96E-06 & -  \\
    Peaceman-& 1/20 & 8.78E-07 & 2.08  & 8.96E-07 & 2.17 \\
    Rachford-& 1/40 & 2.19E-07 & 2.00  & 2.18E-07 & 2.04 \\
    ADI      & 1/80 & 5.55E-08 & 1.98  & 5.37E-08 & 2.02 \\
    \midrule
              & 1/10 & 2.55E-06 & -     & 3.11E-06 & -  \\
    Douglas-& 1/20 & 6.20E-07 & 2.04  & 7.60E-07 & 2.03 \\
    ADI       & 1/40 & 1.61E-07 & 1.95  & 1.81E-07 & 2.07 \\
              & 1/80 & 4.05E-08 & 1.99  & 4.45E-08 & 2.02 \\
    \midrule
              & 1/10 & 2.55E-06 & -     & 3.11E-06 & -  \\
    D'yakonov-& 1/20 & 6.20E-07 & 2.04  & 7.60E-07 & 2.03 \\
    ADI       & 1/40 & 1.61E-07 & 1.95  & 1.81E-07 & 2.07 \\
              & 1/80 & 4.05E-08 & 1.99  & 4.45E-08 & 2.02 \\
    \toprule
  \end{tabular*}\label{tab:10}
  \end{center}
\end{table}

From Tables \ref{tab:7}-\ref{tab:9}, it can seen that the numerical errors of the Douglas-ADI
\eqref{eq:4.17} and D'yakonov-ADI \eqref{eq:4.18} schemes are identical,
and both are smaller than those of Peaceman-Rachford-ADI \eqref{eq:4.16} scheme.
By examining the values of Tables \ref{tab:7}-\ref{tab:9}, we immediately notice that the numerical errors are
increasing with the decrease of the absolute values of the trigonometric polynomials $Q(\theta,\alpha)$. This is because of the different degree of the numerical error dissipation. 
This phenomenon also happen in the one dimensional case being reported in Tables \ref{tab:1}-\ref{tab:8}.
In Table \ref{tab:10}, we list the numerical errors and convergent rates for three ADI schemes.  The weighted parameters  $(\lambda_{1},\lambda_{2},\lambda_{3})$ and $(\gamma_{1},\gamma_{2},\gamma_{3})$ in $x$ and $y$ directions are selected in
different sets.
\begin{table}[H]\fontsize{10pt}{12pt}\selectfont
  \begin{center}
  \caption{The absolute value of trigonometric polynomials $Q(\theta_{x},\alpha)$ and $Q(\theta_{x},\beta)$ with the parameters $(\lambda_{1},\lambda_{2},\lambda_{3})$ and $(\gamma_{1},\gamma_{2},\gamma_{3})$
   selected in different sets.}\vspace{5pt}
  \begin{tabular*}{\linewidth}{@{\extracolsep{\fill}}*{2}{r}*{12}{c}}
   \toprule
    Sets &  $(\lambda_{1},\lambda_{2},\lambda_{3})$ and $(\gamma_{1},\gamma_{2},\gamma_{3})$ &
     $\max_{\theta_{x},\theta_{y}\in[0,\pi],\alpha,\beta\in[1,2]}\big\{|Q(\theta_{x},\alpha)|,|Q(\theta_{x},\alpha)|\big\}$\\
    \toprule
                   & $\lambda_{1}=\gamma_{1}=0.80$ & 2.3979    \\
 $\mathcal{S}_{1}$ & $\lambda_{1}=\gamma_{1}=1.00$ & 4.2422    \\
                   & $\lambda_{1}=\gamma_{1}=1.20$ & 7.1927    \\
    \toprule
                   & $\lambda_{2}=\gamma_{2}=-0.3$ & 6.3941    \\
 $\mathcal{S}_{2}$ & $\lambda_{2}=\gamma_{2}=0$    & 3.9983    \\
                   & $\lambda_{2}=\gamma_{2}=0.3$  & 2.3316    \\
    \toprule
                   & $\lambda_{3}=\gamma_{3}=-0.002$& 3.9663    \\
 $\mathcal{S}_{3}$ & $\lambda_{3}=\gamma_{3}=0$     & 3.9983    \\
                   & $\lambda_{3}=\gamma_{3}=0.002$ & 4.0302    \\
                   \toprule
                   & $\lambda_{3}=0.002,\gamma_{2}=0.4$& 2.1937    \\
 $\mathcal{S}_{3},\mathcal{S}_{2}$ & $\lambda_{3}=0.002,\gamma_{2}=0.5$     & 2.1122    \\
    \toprule
  \end{tabular*}\label{tab:11}
  \end{center}
\end{table}
\section{Concluding remarks}\label{sec:6}
With the firstly introducing of the second order WSGD operators for space discretizations in \cite{Tian:11}, this paper further develops a new family of second order WSGD operators, called second order WSGD operators II. The second order WSGD operators II are used to discretize the one and two dimensional space fractional advection diffusion equations. The sufficient stability conditions are obtained, and by numerical test it is easy to get the effective region of the parameters. The extensive numerical experiments are performed to show the powerfulness of the second order WSGD operators II for space discretizations. In particular, by enhancing the numerical error dissipation, i.e., by adjusting the parameters to decrease the amplification factor, the accuracy can be imporved.


\section*{Acknowledgements}
The authors thank Prof Yujiang Wu for his constant encouragement and support.
This research was partially supported by the National Natural Science Foundation of China under Grant  No. 11271173,
 the Starting Research Fund from the Xi'an University of
Technology under Grant No. 108-211206 and the Scientific
Research Program Funded by Shaanxi Provincial Education Department under Grant No. 2013JK0581.
Can Li would like to thank Dr WenYi Tian for the helpful discussions during preparing this work.


  \end{document}